\theoremstyle{plain}
\newtheorem{theorem}{Theorem}[section]
\newtheorem{lemma}[theorem]{Lemma}
\newtheorem{proposition}[theorem]{Proposition}
\newtheorem{corollary}[theorem]{Corollary}
\theoremstyle{definition}
\newtheorem{remark}[theorem]{Remark}
\numberwithin{equation}{section}
\DeclareMathOperator{\hdim}{hdim}
\DeclareMathOperator{\hspec}{hspec}
\newcommand{\AlignFootnote}[1]{%
    \ifmeasuring@
    \else
        \footnote{#1}%
    \fi
}
\newcommand{\Z}{\mathbb{Z}}
\newcommand{\N}{\mathbb{N}}
\title{A pro-$2$ Group with full normal Hausdorff Spectra}
\author[I. de las Heras]{Iker de las Heras} 
\address{Iker de las Heras: Mathematisches Institut, Heinrich-Heine-Universit\"at, 40225
  D\"usseldorf, Germany; Department of Mathematics, University
  of the Basque Country UPV/EHU, 48940 Leioa, Spain}
\email{iker.delasheras@hhu.de}
\author[A. Thillaisundaram]{Anitha Thillaisundaram} 
\address{Anitha Thillaisundaram: Centre for Mathematical Sciences, Lund University, 223 62 Lund, Sweden}
\email{anitha.t@cantab.net}
\date{\today}
\thanks{The first author is supported by the Spanish Government, grant MTM2017-86802-P, partly with FEDER funds, and by the Basque Government, grant IT974-16. He is also supported by a postdoctoral grant of the Basque Government. The second author acknowledges the support from EPSRC, grant EP/T005068/1. 
}
\keywords{Pro-$p$ groups, Hausdorff dimension, 
    normal Hausdorff spectrum}
\subjclass[2010]{Primary 20E18; Secondary 28A78}
\begin{document}

\maketitle

\begin{abstract}
We construct a $2$-generated pro-$2$ group with full normal Hausdorff spectrum $[0,1]$, with respect to each of the four 
standard filtration series: the $2$-power series, the lower $2$-series, the Frattini series, and the dimension subgroup series. 
This answers a question of Klopsch and the second author, for the even prime case; 
the odd prime case was settled by the first author and Klopsch. Also, our construction gives the first example of a finitely generated pro-$2$ group with full Hausdorff spectrum with respect to the lower $2$-series.
\end{abstract}

\section{Introduction}

Let $\Gamma$ be a countably based infinite
profinite group and consider a \emph{filtration series} $\mathcal{S}$
of~$\Gamma$, that is, a descending chain
$\Gamma = \Gamma_0 \ge \Gamma_1 \ge \cdots$ of open normal subgroups
$\Gamma_i \trianglelefteq_\mathrm{o} \Gamma$ such that $\bigcap_i \Gamma_i = 1$.  These open normal subgroups form a base of neighbourhoods of the identity and
induce a translation-invariant metric on~$\Gamma$ given by
$d^\mathcal{S}(x,y) = \inf \left\{ \lvert \Gamma : \Gamma_i \rvert^{-1} \mid x
  \equiv y \pmod{\Gamma_i} \right\}$,
for $x,y \in \Gamma$.  This  yields,  for a subset
$U \subseteq \Gamma$, the \emph{Hausdorff
  dimension} $\hdim_{\Gamma}^\mathcal{S}(U) \in [0,1]$ with respect to the filtration series~$\mathcal{S}$.

Over the past twenty years, there have been interesting applications of Hausdorff dimension to profinite groups, starting with the pioneering work of Abercrombie~\cite{Abercrombie}, Barnea and Shalev~\cite{BaSh97}; see~\cite{KTZR} for a good overview. 
Barnea and Shalev~\cite{BaSh97} established the following
algebraic formula of the Hausdorff dimension of a
closed subgroup $H$ of $\Gamma$ as a logarithmic density:
\begin{equation*}
  \hdim_{\Gamma}^\mathcal{S}(H) =
  \varliminf_{i\to \infty} \frac{\log \lvert H\Gamma_i : \Gamma_i
    \rvert}{\log \lvert \Gamma : \Gamma_i \rvert},
\end{equation*}
where 
$\varliminf_{i \to \infty} a_i$ is the lower limit of a
sequence $(a_i)_{i \in \mathbb{N}}$ in
$\mathbb{R}$.

The \emph{Hausdorff spectrum} of $\Gamma$ with respect to $\mathcal{S}$ is
\[
\hspec^\mathcal{S}(\Gamma) = \{ \hdim_{\Gamma}^\mathcal{S}(H) \mid H \le_\mathrm{c} \Gamma\}
\subseteq [0,1],
\]
where $H$ runs through all closed subgroups of~$\Gamma$.  
Shalev~\cite[\S 4.7]{Sh00} was the first to consider the
\emph{normal Hausdorff spectrum} of~$\Gamma$, with respect to
$\mathcal{S}$, that is,
\[
\hspec^{\mathcal{S}}_{\trianglelefteq}(\Gamma) = \{ \hdim^{\mathcal{S}}_{\Gamma}(H) \mid H
\trianglelefteq_\mathrm{c} \Gamma \},
\]
which reflects the spread of Hausdorff dimensions of closed normal
subgroups in~$\Gamma$.  Until very recently,
little was known about normal Hausdorff spectra of finitely generated pro-$p$
groups. Indeed, early examples of normal Hausdorff spectra were all finite (see~\cite[\S 4.7]{Sh00}), until Klopsch and the second author~\cite{KT} constructed the first example of a finitely generated pro-$p$ group, for every prime $p$, with infinite normal Hausdorff spectra with respect to the five standard filtration series: the $p$-power series~$\mathcal{P}$,  the iterated $p$-power series~$\mathcal{I}$, the lower $p$-series~$\mathcal{L}$, the Frattini series~$\mathcal{F}$, and the dimension subgroup series~$\mathcal{D}$; we refer the reader to Section~2 for the definitions.
The normal Hausdorff spectra of the groups in~\cite{KT} consist of an interval $[0,\xi]$, for $\xi\le \nicefrac{1}{3}$, with one or two isolated points. The question was raised in~\cite{KT} whether a finitely generated pro-$p$ group, for $p$ any prime,  could be constructed with full normal Hausdorff spectra $[0,1]$. This was answered, for every odd prime $p$, by the first author and Klopsch~\cite{HK}. Their constructed group, however, does not produce the desired result for the case $p=2$.

In this paper, we settle the aforementioned question for $p=2$, by considering a modification of the construction in~\cite{HK}. Our group $G$ is a $2$-generated extension of an elementary abelian pro-$2$ group by the pro-$2$ wreath product $W= C_2 \mathrel{\hat{\wr}} \mathbb{Z}_2=\varprojlim_{k \in \N} C_2 \mathrel{\wr} C_{2^k}$.  We follow the strategy in~\cite{HK} to prove the result.

\begin{theorem}
The pro-$2$ group $G$ satisfies
$$
\hspec_{\trianglelefteq}^{\mathcal{S}}(G)= \hspec^{\mathcal{S}}(G)  =[0,1],
$$
for $\mathcal{S}\in\{\mathcal{M},\mathcal{L},\mathcal{D},\mathcal{P},\mathcal{F}
\}$.
\end{theorem}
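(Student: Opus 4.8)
The plan is to reduce the whole statement to a single realization problem. Because one always has the inclusions
\[
\hspec_{\trianglelefteq}^{\mathcal{S}}(G)\subseteq\hspec^{\mathcal{S}}(G)\subseteq[0,1],
\]
it suffices to prove that \emph{every} $\alpha\in[0,1]$ arises as $\hdim_G^{\mathcal{S}}(H)$ for some closed \emph{normal} subgroup $H\trianglelefteq_\mathrm{c}G$; realizing all of $[0,1]$ by normal subgroups forces both spectra to coincide with $[0,1]$ at once. Thus for each series $\mathcal{S}\in\{\mathcal{M},\mathcal{L},\mathcal{D},\mathcal{P},\mathcal{F}\}$ and each target $\alpha$ I must exhibit a normal subgroup of Hausdorff dimension exactly $\alpha$, computing these dimensions throughout via the Barnea--Shalev formula $\hdim_G^{\mathcal{S}}(H)=\varliminf_{i}\log\lvert H\Gamma_i:\Gamma_i\rvert/\log\lvert\Gamma:\Gamma_i\rvert$.

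The source of normal subgroups is the large elementary abelian normal subgroup of $G$ furnished by the construction, namely the part built from the kernel of $G\twoheadrightarrow W$ together with the base group of the wreath product $W=C_2\mathrel{\hat{\wr}}\mathbb{Z}_2$. Viewing this abelian normal subgroup $N$ as a topological $\mathbb{F}_2$-vector space on which $G$ acts through its quotient $W$, its closed $G$-invariant subspaces are precisely the closed submodules over the pertinent completed group algebra (a subquotient of $\mathbb{F}_2[[W]]$, in the simplest layers $\cong\mathbb{F}_2[[T]]$), and such submodules are automatically normal in $G$. I would first decompose $N$ into finite layers compatible with the filtration $\Gamma_i$, record the (geometrically growing) $\mathbb{F}_2$-ranks of the successive layers, and then build a family of submodules by a staircase/subset selection: along a carefully chosen set of layers one includes the full layer, along the rest one includes nothing (or a controlled codimension). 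The point of passing to the extension rather than working inside $W$ alone is flexibility: the base of $W$ by itself yields only the discrete chain of submodules $T^n\mathbb{F}_2[[T]]$, and hence only countably many dimensions, whereas the engineered module structure supplies enough independent layers to tune the density continuously.

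With such an $H$ in hand, the Hausdorff dimension is governed by how the cumulative ratio $\log\lvert H\Gamma_i:\Gamma_i\rvert/\log\lvert\Gamma:\Gamma_i\rvert$ behaves. I would arrange the inclusion pattern so that this ratio stays above $\alpha-\varepsilon$ from some point on, while dipping down to $\alpha+o(1)$ along an infinite subsequence of layer boundaries; this pins the lower limit at exactly $\alpha$. The two sides of the estimate correspond to choosing, within each block of layers, the proportion of rank that is switched on, using the geometric growth of the layer ranks to absorb the fluctuation coming from the intermediate terms $\Gamma_i$ that fall strictly between consecutive layer boundaries.

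The main obstacle, and where I expect the real work to lie, is uniformity across the five filtration series. I would carry out the computation first for the tailored series $\mathcal{M}$, whose layers are transparent for the module $N$, obtaining $\hdim_G^{\mathcal{M}}(H)=\alpha$ directly. I would then compare $\mathcal{M}$ with each of $\mathcal{L},\mathcal{D},\mathcal{P},\mathcal{F}$ by a sandwich argument on the index sequences $\lvert\Gamma:\Gamma_i\rvert$, showing that the four standard series are cofinally interleaved with $\mathcal{M}$ up to a bounded distortion of the logarithmic densities, so that $\hdim_G^{\mathcal{S}}(H)=\hdim_G^{\mathcal{M}}(H)$ for the subgroups constructed. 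The delicate points are verifying that normality does not over-rigidify the available submodules, so that the continuum of densities survives the module-invariance requirement, and that the fluctuation control of the previous step is robust enough to hold simultaneously for every series, which requires an explicit understanding of how each standard series sits inside $G$ relative to the layers of $N$.
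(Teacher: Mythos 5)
Your realization engine is, in essence, the paper's own: the paper also realizes every $\alpha\in[0,1]$ by closed normal subgroups sitting inside the elementary abelian kernel $Z$ of $G\twoheadrightarrow W$, on which $G$ acts through a procyclic quotient (note $[H,Z]=1$, so $H\le C_G(Z)$). The paper packages this as Proposition~\ref{proposition path/area}, quoted from~\cite{HK}: if $Z$ has strong Hausdorff dimension $1$ with respect to $\mathcal{S}$ and each $S_i\cap Z$ is sandwiched as $\gamma_{n_i+1}(G)\cap Z\le S_i\cap Z\le K_i$ with $\varliminf_i e_in_i/\log_2|Z:K_i\cap Z|=0$ (here $e_i=1$ since $Z$ is elementary abelian), then $[0,1]\subseteq\hspec_{\trianglelefteq}^{\mathcal{S}}(G)$; your staircase-of-submodules construction is essentially what the proof of that proposition does. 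Two details of your set-up need repair, though: the abelian module must be $Z$ itself, not ``the kernel together with the base group'' --- that larger subgroup is the preimage $H$ of $B$, which is nonabelian of exponent $4$ (Remark~\ref{remark derived subgroup}); and the statement that $\hdim_G^{\mathcal{S}}(Z)=1$, as a proper limit, for each of the five series is a theorem needing a separate proof per series --- it is exactly the second assertion of each of Theorems~\ref{M}, \ref{LD}, \ref{P} and~\ref{F} --- not a by-product of layer bookkeeping.

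The genuine gap is your transfer step from $\mathcal{M}$ to $\mathcal{L},\mathcal{D},\mathcal{P},\mathcal{F}$. Hausdorff dimension is a \emph{lower limit}, and lower limits survive interleaving of filtrations only when the distortion tends to $1$, not when it is merely bounded. Here it does not tend to $1$: the series $\mathcal{M}$, $\mathcal{P}$, $\mathcal{F}$ are sparse, with $\log_2|G:M_k|=k+2^{k+1}+\binom{2^k}{2}$ and $\log_2|Z:\Gamma_k\cap Z|=2\binom{2^{k-1}}{2}$ growing geometrically with ratio tending to $4$, whereas $\log_2|Z:\gamma_i(G)\cap Z|\approx i^2/4$ (Remark~\ref{remark index}), so on the order of $2^k$ terms of $\mathcal{L}$ or $\mathcal{D}$ fall strictly between consecutive $\mathcal{M}$-checkpoints. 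A $G$-invariant staircase subgroup can read ratio $\alpha$ at every $\mathcal{M}$-checkpoint and still dip to roughly $\alpha/4$ at intermediate $\mathcal{L}$-scales --- for instance if the mass of an included block is concentrated in deep lower-central layers, which $G$-invariance does not forbid --- so $\hdim_G^{\mathcal{L}}(H)<\hdim_G^{\mathcal{M}}(H)$ is entirely possible for such subgroups. Even between two sparse series of comparable index growth the argument fails, because $\log_2|HS:S|$ depends on the subgroup $S$ itself and not only on $|G:S|$, and $M_k\cap Z$, $G^{2^k}\cap Z$ and $\Phi_k(G)\cap Z$ are genuinely different subgroups. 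Closing this gap requires knowing where each series actually meets $Z$ --- precisely the content of Propositions~\ref{proposition G_k p-central series} and~\ref{proposition G_k dimension subgroup series}, and of the sandwich estimates $\gamma_{2^{k+1}}(G)\le G^{2^k}\le\Gamma_k$ and $T_k(\gamma_{2^{k}+2^{k-1}-1}(G)\cap Z)\le\Phi_k(G)\le\Gamma_k$ established in Theorems~\ref{P} and~\ref{F} via Lemma~\ref{lemma Gamma} --- and the paper avoids cross-series comparison altogether by verifying the hypotheses of Proposition~\ref{proposition path/area} separately for each of the five series. Some such per-series analysis is unavoidable; it, rather than the construction inside $Z$, is where the real work of the paper lies.
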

\noindent Here $\mathcal{M}$ denotes a natural filtration series that arises from the construction of~$G$; see Sections~3 and~4 for details. Note for a pro-2 group, the iterated $2$-power series coincides with the Frattini series.

It was asked in~\cite[Prob.~5]{BaSh97}, whether there is a finitely generated pro-$p$ group~$\Gamma$ with  $\hspec^{\mathcal{P}}(\Gamma)=[0,1]$.
This was answered in the affirmative independently by Levai (see~\cite[\S 4.2]{Sh00}) and Klopsch (see~\cite{Klopsch}).
In the later case, it was shown that $C_p \mathrel{\hat{\wr}} \mathbb{Z}_p$ has full Hausdorff spectra with respect to the  $p$-power series~$\mathcal{P}$ (which equals the iterated $p$-power series~$\mathcal{I}$),  the Frattini series~$\mathcal{F}$, and the dimension subgroup series~$\mathcal{D}$, but that it does not have full Hausdorff spectrum with respect to the lower $p$-series~$\mathcal{L}$. The work of Klopsch and the second author~\cite{KT} give further examples of finitely generated pro-$p$ groups with full Hausdorff spectra with respect to~$\mathcal{P}$, $\mathcal{I}$, $\mathcal{F}$ and~$\mathcal{D}$, and the work of Garaialde Oca\~{n}a, Garrido and Klopsch~\cite{GGK} provide many examples of finitely generated pro-$p$ groups with full Hausdorff spectra with respect to~$\mathcal{F}$ and~$\mathcal{D}$.

As was the case for the groups in~\cite{HK}, our group here yields the first example of a finitely generated pro-$2$ group with full Hausdorff spectrum with respect to~$\mathcal{L}$.

\medskip

\noindent \emph{Organisation}.  Section~$2$ contains
preliminary results.  In Section~$3$ we give a presentation of the pro-$2$ group $G$ and describe a series
of finite quotients $G_k$, $k \in \mathbb{N}$, such that
$G = \varprojlim G_k$.
Lastly, in Section~$4$ we compute the normal Hausdorff
spectra of $G$ with respect to~$\mathcal{M}$, $\mathcal{L}$, $\mathcal{D}$,  $\mathcal{P}$ and $\mathcal{F}$. 

\medskip

\noindent \textit{Notation.}     All subgroups of profinite
groups are generally taken to be closed subgroups.  We use the notation $\le_\text{o}$ and $\le_\text{c}$ to denote open and closed subgroups respectively. 
Throughout, we  use
left-normed commutators, for example, $[x,y,z] = [[x,y],z]$.

\medskip

\noindent\textbf{Acknowledgements.} We thank the Heinrich-Heine-Universit\"{a}t  D\"{u}sseldorf, where a large part of this research was carried out. We also thank the referee for their helpful comments.


\section{Preliminaries}

 Let $p$ be a prime. For $\Gamma$  a finitely generated pro-$p$ group, we
define below the four natural filtration series on $\Gamma$.  The \emph{$p$-power
  series} of $\Gamma$ is given by
\[
\mathcal{P} \colon \Gamma^{p^i} = \langle x^{p^i} \mid x \in \Gamma
\rangle, \quad i \in \mathbb{N}_0,
\]
where $\N_0=\N\cup\{0\}$.
The \emph{lower $p$-series} (or lower $p$-central
series) of $\Gamma$ is given recursively by
\begin{align*}
  \mathcal{L} \colon P_1(\Gamma) = \Gamma,  %
  & \quad \text{and} \quad  P_i(\Gamma) = P_{i-1}(\Gamma)^p
    \, [P_{i-1}(\Gamma),\Gamma] \quad
    \text{for $i \in\N_{\geq 2}$,} \\
  \intertext{and the \emph{Frattini series} of $\Gamma$ is
  given recursively by} 
  \mathcal{F} \colon \Phi_0(\Gamma) = \Gamma, %
  & \quad \text{and} \quad \Phi_i(\Gamma) = \Phi_{i-1}(\Gamma)^p
    \, [\Phi_{i-1}(\Gamma),\Phi_{i-1}(\Gamma)] \quad \text{for $i \in\N$.} \\
  \intertext{The (modular) \emph{dimension subgroup series} (or
  Jennings series or Zassenhaus series) of $\Gamma$ is defined recursively
  by}
  \mathcal{D} \colon D_1(\Gamma) = \Gamma, %
  & \quad \text{and} \quad D_i(\Gamma) = D_{\lceil i/p \rceil}(\Gamma)^p 
    \prod_{1 \le j <i} [D_j(\Gamma),D_{i-j}(\Gamma)] \quad \text{for $i \in\N_{\geq 2}$.}  
\end{align*}
In addition we set $P_0(\Gamma) = D_0(\Gamma) = \Gamma$.

Often an additional natural filtration series is considered: the \emph{iterated $p$-power series} of~$\Gamma$, which is defined by
\[
\mathcal{I} : I_0(\Gamma)=\Gamma,\quad\text{and}\quad I_j(\Gamma)=I_{j-1}(\Gamma)^p,\quad \text{for }j\in\N.
\]
Recall
for a pro-2 group~$\Gamma$, the iterated $2$-power series coincides with the Frattini series.

\medskip

Next, for convenience, we recall the following standard commutator identities. 

\begin{lemma}
\label{lemma commutator identities}
Let $\Gamma=\langle a, b\rangle$ be a group, let $p$ be any prime, and let $r\in\N$.
For $u,v\in \Gamma$, let $K(u,v)$ denote the normal closure in $\Gamma$ of (i) all commutators in $\{u,v\}$ of weight at least $p^r$ that have weight at least $2$ in $v$, together with (ii) the $p^{r-s+1}$th
powers of all commutators in $\{u,v\}$ of weight less than $p^s$ and of weight at least $2$ in $v$ for $1\le s\le r$.
Then
\begin{align}
 (ab)^{p^r}&\equiv_{K(a,b)}a^{p^r}b^{p^r}[b,a]^{\binom{p^r}{2}}[b,a,a]^{\binom{p^r}{3}}\cdots[b,a,
 \overset{p^r-2}{\ldots},a]^{\binom{p^r}{p^r-1}}[b,a,\overset{p^r-1}{\ldots},a], \label{equ:commutator-formula-1}\\
 [a^{p^r},b]&\equiv_{K(a,[a,b])}[a,b]^{p^r}[a,b,a]^{\binom{p^r}{2}}\cdots[a,b,a,
 \overset{p^r-2}{\ldots},a]^{\binom{p^r}{p^r-1}}[a,b,a,\overset{p^r-1}{\ldots},a].\label{equ:commutator-formula-2}
 \end{align}
\end{lemma}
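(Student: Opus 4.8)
The plan is to prove the first identity~\eqref{equ:commutator-formula-1} by a collection argument graded by the weight in $b$, and then to obtain the second identity~\eqref{equ:commutator-formula-2} as a formal consequence of it. For the deduction I would use the elementary rewriting
\[
[a^{p^r},b]=a^{-p^r}(a^{p^r})^{b}=a^{-p^r}(a^{b})^{p^r}=a^{-p^r}\bigl(a[a,b]\bigr)^{p^r},
\]
and then apply~\eqref{equ:commutator-formula-1} to the two elements $a$ and $[a,b]$ in place of $a$ and $b$ (the normal closure defining the error term may be taken in all of $\Gamma$, which only enlarges it). Since $[[a,b],a,\overset{j}{\ldots},a]=[a,b,a,\overset{j}{\ldots},a]$ in left-normed notation, the factor $a^{p^r}$ produced by the formula cancels the leading $a^{-p^r}$ and the surviving product is exactly the right-hand side of~\eqref{equ:commutator-formula-2}, with error module $K(a,[a,b])$ as required; that a congruence modulo $K$ survives left multiplication is immediate from $K\trianglelefteq\Gamma$.

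For~\eqref{equ:commutator-formula-1} itself I would first pin down the terms of weight $1$ in $b$ by passing to $\bar F=F/N$, where $F=\langle a,b\rangle$ is free and $N$ is the normal closure of all commutators of weight $\ge 2$ in $b$. In $\bar F$ the normal closure $B$ of $b$ is abelian, hence a module over $\Z[t^{\pm1}]$ with $t$ acting by conjugation by $a$, and $[b,a,\overset{j}{\ldots},a]$ corresponds to $(t-1)^{j}b$. Collecting all occurrences of $a$ to the left yields $(ab)^{n}=a^{n}\prod_{k=0}^{n-1}b^{a^{k}}$ in $\bar F$, and writing $u=t-1$ one finds
\[
\prod_{k=0}^{n-1}b^{a^{k}}=\frac{t^{n}-1}{t-1}\,b=\sum_{i=1}^{n}\binom{n}{i}u^{i-1}b,
\]
which for $n=p^{r}$ is precisely $b^{p^{r}}[b,a]^{\binom{p^{r}}{2}}\cdots[b,a,\overset{p^{r}-1}{\ldots},a]$. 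This shows that each surviving commutator occurs to the first power with the stated binomial exponent, and that none of weight exceeding $p^{r}$ occurs.

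It then remains to check that, in the genuine expansion of $(ab)^{p^{r}}$ in $F$, every commutator of weight $\ge 2$ in $b$ already lies in $K(a,b)$. For this I would invoke the Hall--Petrescu collection formula $(ab)^{p^{r}}=a^{p^{r}}b^{p^{r}}\prod_{i=2}^{p^{r}}c_{i}^{\binom{p^{r}}{i}}$, where each $c_{i}$ is a product of basic commutators of weight $i$, together with the valuation identity $v_{p}\!\binom{p^{r}}{i}=r-v_{p}(i)$ (which follows from $\binom{p^{r}}{i}=\tfrac{p^{r}}{i}\binom{p^{r}-1}{i-1}$ and the fact that $\binom{p^{r}-1}{i-1}$ is a $p$-adic unit). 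A commutator of weight $\ge 2$ in $b$ and of weight $i\ge p^{r}$ lies in $K(a,b)$ by part~(i) of its definition; for $i<p^{r}$, choosing $s$ with $p^{s-1}\le i<p^{s}$ gives $s\le r$ and $v_{p}(i)\le s-1$, so $v_{p}\!\binom{p^{r}}{i}\ge r-s+1$, and the corresponding factor is a power of a $p^{r-s+1}$th power of a commutator of weight $<p^{s}$, hence lies in $K(a,b)$ by part~(ii). Comparison with the $\bar F$-computation, in which the left-normed commutators of distinct weights are independent, then confirms that the coefficients of the weight-$1$-in-$b$ commutators are exactly the stated $\binom{p^{r}}{i}$.

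The delicate point, where I expect the real work to lie, is the bookkeeping in this last step: the splitting of each $c_{i}$ into the part of weight $1$ in $b$ and the part of weight $\ge 2$ in $b$ is only valid modulo commutators of strictly higher weight, so the error terms must be cleared by a descending induction on weight, the top layer (weight $\ge p^{r}$) being absorbed wholesale by part~(i). Matching the precise shape of $K(a,b)$ rather than the coarser subgroup $N$ is exactly what the calibrated exponents $p^{r-s+1}$ in part~(ii), governed by the valuation identity $v_{p}\binom{p^{r}}{i}=r-v_{p}(i)$, are designed to achieve.
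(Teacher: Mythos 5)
The paper gives no proof of this lemma: it is introduced with ``for convenience, we recall the following standard commutator identities'' and is used as a known consequence of P.~Hall's collection process (compare the analogous statements in~\cite{KT} and~\cite{HK}), so there is no in-paper argument to compare yours against; I assess it on its own terms. The parts of your proposal that are actually carried out are correct: the derivation of~\eqref{equ:commutator-formula-2} from~\eqref{equ:commutator-formula-1} via $[a^{p^r},b]=a^{-p^r}\bigl(a[a,b]\bigr)^{p^r}$ is complete (including the observations that left-normed brackets match up and that taking the normal closure in all of $\Gamma$ only enlarges the error subgroup), and the module computation in $\bar F=F/N$ correctly identifies the exponents $\binom{p^r}{i}$. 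But note that this second step only proves the congruence modulo $N$, the normal closure of \emph{all} commutators of weight at least $2$ in $b$; the entire content of the lemma is that the error lies in the much smaller subgroup $K(a,b)$.

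That content is exactly where your proposal stops being a proof, as you yourself flag. Part~(ii) of the definition of $K(u,v)$ consists of $p^{r-s+1}$th powers of \emph{individual} commutators, whereas the Hall--Petrescu factors $c_i$ are \emph{products} of basic commutators, and are such only modulo $\gamma_{i+1}$; since $m$th powers do not distribute over products in a nonabelian group, the inference ``$p^{r-s+1}\mid\binom{p^r}{i}$, hence $c_i^{\binom{p^r}{i}}\in K(a,b)$'' is not valid as written. Re-expanding such powers of products (and reordering factors so as to compare with the mod-$N$ computation) creates new commutator factors of larger weight but with \emph{smaller} $p$-valuation in their exponents: for instance a weight-$j$ commutator in weight-$i$ constituents has group weight $ji$ and appears with exponent $\binom{m}{j}$, $m=\binom{p^r}{i}$, where only $v_p\binom{m}{j}\ge v_p(m)-v_p(j)$ is guaranteed. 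One must therefore verify that the loss of valuation is always compensated by the gain in weight, i.e.\ that every new factor again satisfies condition~(i) or~(ii) defining $K(a,b)$. (This does work out: $ji\ge p^{\,s-1+v_p(j)}$ raises the relevant weight threshold $s'$ by at least $v_p(j)$, which is precisely what the calibration $p^{r-s+1}$ encodes, and the recursion terminates at weight $p^r$ by part~(i).) You name this ``descending induction on weight'' and correctly predict that the real work lies there, but you do not carry it out; since that verification \emph{is} the lemma, what you have is a correct and indeed standard plan together with complete proofs of the easy reductions, not a complete proof.
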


\medskip

We  also recall the following definition from~\cite{KT}: for a countably
based infinite \mbox{pro-$p$} group $\Gamma$, equipped with a filtration series
$\mathcal{S} \colon \Gamma = \Gamma_0 \ge \Gamma_1\ge \cdots$, and a
closed subgroup $H \le_\mathrm{c} \Gamma$, we say that $H$ has \emph{strong
  Hausdorff dimension in $\Gamma$ with respect to $\mathcal{S}$} if
\[
\hdim^{\mathcal{S}}_{\Gamma}(H) = \lim_{i \to \infty} \frac{\log_p \lvert H
  \Gamma_i : \Gamma_i \rvert}{\log_p \lvert \Gamma : \Gamma_i \rvert}
\]
is given by a proper limit.

Lastly we note here a result that will be useful for the computation of normal Hausdorff spectra in the sequel.

\begin{proposition}\cite[Prop.~2.4]{HK}
\label{proposition path/area}
 Let $\Gamma$ be a countably based
  pro-$p$ group with an infinite abelian normal subgroup
  $Z \trianglelefteq_\mathrm{c} \Gamma$ such that $\Gamma/C_{\Gamma}(Z)$ is procyclic.
  Let $\mathcal{S} \colon \Gamma = S_0\ge S_1\ge \cdots$ be a filtration
  series of $\Gamma$ and consider the induced filtration series
  $\mathcal{S} |_Z \colon Z = S_0 \cap Z \ge S_1 \cap Z \ge\cdots$ of $Z$; for $i\in\N_0$, let $p^{e_i}$ be the exponent of $Z/(S_i \cap Z)$.  Suppose
  that, for every $i\in \N_0$, there exist $n_i \in \N$ and
  $K_i \le_\mathrm{c} \Gamma$ such that
  \[
  \gamma_{n_i + 1}(\Gamma) \cap Z \le S_i \cap Z \le K_i \qquad
  \text{and} \qquad
  \varliminf_{i\rightarrow\infty}\frac{e_in_i}{\log_p \lvert Z : K_i
    \cap Z \rvert} = 0.
  \]
  If $Z$ has strong Hausdorff dimension
  $\xi = \hdim_G^{\mathcal{S}}(Z) \in [0,1]$, then
  \[ 
  [0,\xi]\subseteq \hspec_{\trianglelefteq}^{\mathcal{S}}(\Gamma).
  \]
\end{proposition}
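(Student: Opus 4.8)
The plan is to show that every value $\eta \in [0,\xi]$ is realised as $\hdim_\Gamma^\mathcal{S}(H)$ for some $H \trianglelefteq_\mathrm{c} \Gamma$ with $H \le Z$. Since $Z$ is abelian, a subgroup $H \le Z$ is normal in $\Gamma$ precisely when it is invariant under the conjugation action of $\Gamma$ on $Z$; and as $\Gamma/C_\Gamma(Z)$ is procyclic, this action is topologically generated by conjugation by a single element $g \in \Gamma$. Thus the task reduces to constructing $g$-invariant closed subgroups of $Z$ of prescribed Hausdorff dimension. Writing $\ell_i = \log_p \lvert \Gamma : S_i \rvert$ and using that $H \le Z$ gives
\[
\hdim_\Gamma^\mathcal{S}(H) = \varliminf_{i\to\infty} \frac{\log_p \lvert H(S_i \cap Z) : S_i \cap Z \rvert}{\ell_i},
\]
I want to engineer a $g$-invariant $H$ whose density function $i \mapsto \log_p \lvert H(S_i\cap Z):S_i\cap Z \rvert / \ell_i$ has lower limit exactly~$\eta$.

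The crucial estimate controls the cost of normality. For $z \in Z$, the identity $z^g = z[z,g]$ shows that the normal closure $\langle z \rangle^\Gamma$ is generated, modulo $S_i \cap Z$, by the iterated commutators $z, [z,g], [z,g,g], \dots$; but $[z, g, \overset{m}{\ldots}, g] \in \gamma_{m+1}(\Gamma) \cap Z \le S_i \cap Z$ as soon as $m \ge n_i$, by hypothesis. Hence, modulo $S_i \cap Z$, the closure $\langle z \rangle^\Gamma$ is generated by at most $n_i$ elements, each of order dividing the exponent $p^{e_i}$ of the abelian group $Z/(S_i \cap Z)$, so
\[
\log_p \bigl\lvert \langle z \rangle^\Gamma (S_i \cap Z) : S_i \cap Z \bigr\rvert \le e_i n_i.
\]
This says the overhead incurred by passing to a normal closure of a single element is at most $e_i n_i$, which by the hypothesis $\varliminf_i e_i n_i / \log_p \lvert Z : K_i \cap Z \rvert = 0$ is asymptotically negligible against the natural scale $\log_p \lvert Z : K_i \cap Z \rvert$; and since $\log_p \lvert Z : K_i \cap Z \rvert \le \log_p \lvert Z : S_i \cap Z \rvert \le \ell_i$, it is also negligible against $\ell_i$ along a suitable subsequence.

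With this in hand I would run a staircase construction. Fix $\eta \in (0,\xi)$, the cases $\eta = 0$ (take $H = 1$) and $\eta = \xi$ (take $H = Z$, which is normal of strong dimension $\xi$) being immediate. Along a rapidly increasing sequence of indices I alternately \emph{fill} and \emph{pause}: on filling stages I adjoin normal closures of suitably chosen elements of $Z$ so that the density climbs back up toward~$\xi$, while on pausing stages I add nothing, so that the growth of $\ell_i$ drags the density back down toward~$\eta$. Because each adjoined normal generator changes $\log_p \lvert H(S_i\cap Z):S_i\cap Z \rvert$ by at most $e_i n_i$, a negligible increment, the density function can be made to move in arbitrarily small steps, so its lower limit can be pinned to exactly~$\eta$; the gauge $K_i$ is used to sandwich $H(S_i \cap Z)$ between consecutive filtration levels and to absorb the commutator overhead when comparing $S_i \cap Z$ with $K_i \cap Z$.

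The main obstacle is precisely this tension between normality and fine control: an arbitrary, non-normal subgroup of $Z$ could trivially be tuned to any density, but demanding $g$-invariance restricts us to normal closures, whose smallest nontrivial increments have size up to $p^{e_i n_i}$. The entire force of the hypotheses lies in guaranteeing that these increments are asymptotically negligible, so that the attainable densities remain dense enough to interpolate every $\eta \in [0,\xi]$. Verifying that the constructed $H$ is closed, genuinely $g$-invariant, and has lower limit \emph{exactly} $\eta$ (rather than merely $\le \eta$ or $\ge \eta$) is the technical heart of the argument, and is where the precise sandwiching $\gamma_{n_i+1}(\Gamma) \cap Z \le S_i \cap Z \le K_i$ together with the vanishing of $\varliminf_i e_i n_i / \log_p \lvert Z : K_i \cap Z \rvert$ must be invoked.
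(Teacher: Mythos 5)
The paper itself does not prove this proposition; it is quoted verbatim from \cite[Prop.~2.4]{HK}, so your attempt has to be measured against that proof. Two of your ingredients do coincide with it and are correctly justified: the reduction of normality to invariance under a single topological generator $g$ of $\Gamma/C_\Gamma(Z)$ (conjugation by $C_\Gamma(Z)$ is trivial on $Z$, and $Z$ is abelian), and the key estimate $\log_p \lvert \langle z\rangle^\Gamma (S_i\cap Z) : S_i \cap Z\rvert \le e_i n_i$, proved exactly as you do, from $[z,g,\overset{m}{\ldots},g]\in\gamma_{m+1}(\Gamma)\cap Z\le S_i\cap Z$ for $m\ge n_i$. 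This is indeed the cornerstone of the cited proof.

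However, the staircase itself --- which you describe in one paragraph and explicitly defer as ``the technical heart'' --- is where the whole difficulty sits, and as sketched it would not work, for three concrete reasons. First, the hypothesis is a \emph{lower} limit: $e_in_i/\log_p\lvert Z:K_i\cap Z\rvert$ is small only along some subsequence of levels, so your claim that ``the density function can be made to move in arbitrarily small steps'' is false at a general level $i$; fine-tuning is available only at the sparse good levels. Second, $\hdim_\Gamma^{\mathcal S}(H)$ is a lower limit over \emph{all} $i$, so the constructed $H$ must have density at least $\eta-o(1)$ at every level while being approximately $\eta$ along some subsequence. Pausing drags the density of a finitely normally generated subgroup towards $0$ (not towards $\eta$), and since $\log_p\lvert\Gamma:S_i\rvert$ may jump arbitrarily between consecutive levels, one cannot ``stop pausing when the density reaches $\eta$'': it can jump past $\eta$ downwards, ruining the lower bound. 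Third, to protect the intermediate levels one is forced to adjoin on the order of $\eta\log_p\lvert\Gamma:S_i\rvert$ normal generators (essentially whole slabs $S_{j}\cap Z$ modulo deeper terms), and then the per-element overhead accumulates to $(\#\text{generators})\cdot e_in_i$, which is no longer negligible at the next checkpoint; the ``one negligible increment at a time'' bookkeeping collapses precisely when it is needed. The cited proof (following \cite{KT}) resolves this with a genuinely recursive double bookkeeping that is absent from your sketch: one builds a \emph{descending} chain $T_m=\langle Y_1\cup\cdots\cup Y_m\rangle^\Gamma(S_{j(m)}\cap Z)$ with $Y_{m+1}\subseteq S_{j(m)}\cap Z$ and takes $H=\bigcap_m T_m$ (automatically closed and normal, with image at each level $l\le j(m+1)$ equal to that of $\langle Y_1\cup\cdots\cup Y_{m+1}\rangle^\Gamma$); the checkpoint $j(m+1)$ is chosen from the good subsequence \emph{after} the earlier finite sets are fixed, and $Y_{m+1}$ is chosen so that the numerator is pre-loaded up to $\eta\log_p\lvert\Gamma:S_{j(m+1)}\rvert$ before one coasts down to that checkpoint --- it is here, and only here, that the \emph{strong} Hausdorff dimension of $Z$ (the properness of the limit) is used to keep all intermediate densities above $\eta-o(1)$. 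Finally, a small but telling point: the gauge $K_i$ is not used to ``sandwich $H(S_i\cap Z)$ between consecutive filtration levels''; its role is simply that $\log_p\lvert Z:K_i\cap Z\rvert\le\log_p\lvert Z:S_i\cap Z\rvert$, so the (verifiable) hypothesis on $K_i$ yields the bound $\varliminf_i e_in_i/\log_p\lvert Z:S_i\cap Z\rvert=0$ that the proof actually consumes.
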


\smallskip


\section{The pro-\texorpdfstring{$2$}{2} group \texorpdfstring{$G$}{G}}\label{sec:3}

For $k\in\N$, let $\langle \dot x_k\rangle\cong C_{2^k}$ and $\langle \dot y_k\rangle\cong C_2$.
Define
$$W_k=\langle \dot y_k\rangle\wr\langle \dot x_k\rangle\cong B_k\rtimes\langle \dot x_k\rangle$$
where $B_k = \prod_{i = 0}^{2^k-1} \langle {\dot y_k}^{\,\dot  x_k^{\, i}}\rangle\cong C_2^{\, 2^k}$.
The structural results for the finite wreath products $W_k$ transfer naturally to the inverse limit $W\cong\varprojlim_k W_k$, 
i.e. the pro-$2$ wreath product
$$W=\langle \dot x,\dot y\rangle=B\rtimes \langle\dot x\rangle\cong C_2\ \hat{\wr}\ \Z_2$$
with top group $\langle\dot  x\rangle\cong\Z_2$ and base group $B=\langle \dot y^{\dot x^i}\mid i\in\Z\rangle\cong C_2^{\,\aleph_0}$; see~\cite[\S2.4]{KT} for further results.

Let $F_2=\langle a,b\rangle$ be the free pro-$2$ group on two generators and let $k\in\mathbb{N}$.
There exists a closed normal subgroup $R\trianglelefteq F_2$, respectively $R_k\trianglelefteq F_2$, such that 
\[
F_2/R\cong W, \quad\text{ respectively }\,\,
F_2/R_k\cong W_k,
\]
with $a$ corresponding to~$\dot x$, respectively $\dot x_k$, and $b$ corresponding to~$\dot y$, respectively $\dot y_k$.
 
Let $Y\ge R$ be the closed normal subgroup of $F_2$ such that $Y/R$ is
the pre-image of $B$  in $F_2/R$, and let  $Y_k\ge R_k$ be the closed normal subgroup of $F_2$ such that  $Y_k/R_k$ is
the pre-image of $B_k$ in $F_2/R_k$.

Consider now 
\[
N=[R,Y]R^2,
\]
 respectively 
 \[
 N_k=[R_k,Y_k]R_k^{\,2}\langle a^{2^k}\rangle^{F_2},
 \]
  and define 
  \[
  G=F_2/N, \quad\text{respectively }\,\,G_k=F_2/N_k.
  \]
  
We denote by $H$ and $Z$  the closed normal subgroups of $G$ corresponding 
to $Y/N$ and $R/N$, and we denote by $H_k$ and $Z_k$ the closed normal subgroups of  $G_k$ corresponding 
to  $Y_k/N_k$ and $R_k/N_k$. We denote the images of $a, b$ in~$G$, respectively in~$G_k$, by $x,y$, respectively $x_k,y_k$, so that  $G=\langle x,y\rangle$ and $G_k=\langle x_k,y_k\rangle$.

Note that the groups $G_k$ are finite for all $k\in\mathbb{N}$ and that they naturally form an inverse system so that $\varprojlim_k G_k=G$.
Furthermore we have $[H,Z]=1$, respectively $[H_k,Z_k]=1$.

\subsection{Properties of \texorpdfstring{$G_k$}{Gk}}
\label{subsection properties of G_k}

\begin{proposition}
\label{proposition order of Gk}
For $k\in\N$, the logarithmic order of $G_k$ is
$$
\log_2|G_k|=k+2^{k+1}+\binom{2^k}{2}= k+2^{2k-1}+2^{k+1}-2^{k-1}.
$$
\end{proposition}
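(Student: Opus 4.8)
The plan is to read off $\log_2|G_k|$ from the normal series $G_k\trianglerighteq H_k\trianglerighteq Z_k\trianglerighteq H_k'\trianglerighteq 1$, computing each factor separately. The top two factors are immediate from the construction: $G_k/H_k\cong F_2/Y_k\cong W_k/B_k\cong C_{2^k}$ contributes $k$, and $H_k/Z_k\cong Y_k/R_k\cong B_k\cong C_2^{\,2^k}$ contributes $2^k$. So everything reduces to showing that the elementary abelian $2$-group $Z_k=R_k/N_k$ has $\F_2$-dimension $2^k+\binom{2^k}{2}$, where the summand $2^k$ will come from the layer $Z_k/H_k'$ and the summand $\binom{2^k}{2}$ from $H_k'$.

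For the upper bound I would first record the structural consequences of $N_k=[R_k,Y_k]R_k^{\,2}\langle a^{2^k}\rangle^{F_2}$. The inclusion $R_k^{\,2}\le N_k$ makes $Z_k$ elementary abelian, and $[R_k,Y_k]\le N_k$ gives $[H_k,Z_k]=1$; since $H_k/Z_k=B_k$ is abelian, $H_k$ has nilpotency class at most $2$. As $B_k$ is elementary abelian, $\Phi(Y_k)=Y_k^{\,2}Y_k'\le R_k$, and in fact $R_k=\Phi(Y_k)\langle a^{2^k}\rangle$, the extra generator being killed by $\langle a^{2^k}\rangle^{F_2}\le N_k$; hence $Z_k=\Phi(H_k)=H_k^{\,2}H_k'$. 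Writing $u_i$ for the image in $H_k$ of $y_k^{\,x_k^{\,i}}$ for $0\le i\le 2^k-1$, which generate $H_k$ modulo $Z_k$, it follows that $Z_k$ is spanned over $\F_2$ by the $2^k$ squares $u_i^{\,2}$ together with the $\binom{2^k}{2}$ commutators $[u_i,u_j]$ with $i<j$. This already yields $\dim_{\F_2}Z_k\le 2^k+\binom{2^k}{2}$.

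For the reverse inequality I would separate the two types of generators. The squares are the easy half: since $Y_k$ is a free pro-$2$ group of Schreier rank $2^k+1$, computing the abelianization of $H_k=Y_k/N_k$ gives $H_k/H_k'\cong C_4^{\,2^k}$, whence $Z_k/H_k'=\Phi(H_k/H_k')$ has dimension exactly $2^k$ and the $u_i^{\,2}$ are independent modulo $H_k'$. The commutators are the hard half, and this is where I expect the main obstacle: I must show that the $\binom{2^k}{2}$ elements $[u_i,u_j]$ are $\F_2$-independent, i.e.\ that $|H_k'|=2^{\binom{2^k}{2}}$ with no further collapse.

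To settle this I would exhibit the \emph{free-est} admissible group. Let $\Lambda$ be the relatively free pro-$2$ group of class $2$ and exponent $4$ on generators $v_0,\dots,v_{2^k-1}$ in which all squares $v_i^{\,2}$ and all commutators $[v_i,v_j]$ are central of order $2$; concretely $\Lambda$ is a central extension of $C_2^{\,2^k}$ by $\F_2^{\,2^k}\oplus\F_2^{\,\binom{2^k}{2}}$, so $\log_2|\Lambda|=2^{k+1}+\binom{2^k}{2}$. Form $\tilde G_k=\Lambda\rtimes\langle t\rangle$ with $t$ of order $2^k$ permuting the $v_i$ cyclically, and verify that $a\mapsto t$, $b\mapsto v_0$ defines an epimorphism $F_2\twoheadrightarrow\tilde G_k$ whose kernel contains $N_k$: since $\tilde G_k$ modulo its central factor is exactly $W_k$, the kernel lies in $R_k$, and then centrality and exponent $2$ of that factor kill $[R_k,Y_k]$ and $R_k^{\,2}$, while $t^{2^k}=1$ kills $a^{2^k}$. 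Thus $\log_2|\tilde G_k|=k+2^{k+1}+\binom{2^k}{2}$ forces $|G_k|\ge|\tilde G_k|$, and with the upper bound this gives equality, which rewrites as $k+2^{2k-1}+2^{k+1}-2^{k-1}$. The one genuinely delicate point is that $\Lambda$ has exactly this order, i.e.\ that imposing ``squares and commutators central of order $2$'' on $2^k$ free generators forces no relation among the $2^k+\binom{2^k}{2}$ central generators; I would settle this by the collection process of Lemma~\ref{lemma commutator identities}, or by writing $\Lambda$ down explicitly via its defining cocycle, after which the verification that $N_k$ lies in the kernel is routine.
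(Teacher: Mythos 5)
Your proposal is correct and follows essentially the same route as the paper: the upper bound comes from $Z_k$ being elementary abelian and spanned by the $2^k$ squares and $\binom{2^k}{2}$ commutators of the generators of $H_k$, and the lower bound from an epimorphism of $G_k$ onto $C_{2^k}\ltimes\Lambda$, where your relatively free group $\Lambda$ is precisely the paper's $M=E/[\Phi(E),E]\Phi(E)^2$ and your semidirect product is the paper's $\widetilde{G}_k$. The one point you defer --- independence of the squares and commutators in $\Phi(\Lambda)$ --- is exactly where the paper does its real work, settling it by mapping $M$ onto groups of the form $C_2^{\,2^k-1}\times C_4$ and $C_2^{\,2^k-2}\times\mathrm{Heis}(\mathbb{F}_2)$; of your two suggested completions, the explicit cocycle model works, but the collection process alone would not suffice, since it only shows that these elements span $\Phi(\Lambda)$ (the upper bound on $|\Lambda|$), not that they are independent.
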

\begin{proof}
We proceed as in~\cite[Lem.~5.1]{KT}. Since $G_k/Z_k\cong W_k\cong C_2\wr C_{2^k}$, we have
$$
\log_2|G_k|=\log_2|G_k/Z_k|+\log_2|Z_k|=k+2^k+\log_2|Z_k|.
$$
By construction, the subgroup $Z_k$ is elementary abelian, so we obtain
$$
Z_k=\langle \{  (y_k^{x_k^{\,i}})^2 \mid 0\le i \le 2^k-1\} \cup \{[y_k^{x_k^{\,i}},y_k^{x_k^{\,j}}]\mid 0\le i<j\le 2^{k}-1\}\rangle.
$$
Therefore $\log_2|G_k|\le k+2^{k+1}+\binom{2^k}{2}$.

For the converse, we will construct a factor group $\widetilde{G}_k$ of $G_k$ whose logarithmic order is $|\widetilde{G}_k|=k+2^{k+1}+\binom{2^k}{2}$.

We consider the finite $2$-group
$$
M=\langle \widetilde{y}_0,\ldots,\widetilde{y}_{2^k-1}\rangle=E/[\Phi(E),E]\Phi(E)^2,
$$
where $E$ is the free group on $2^k$ generators.
The images of $\widetilde{y}_0,\ldots,\widetilde{y}_{2^k-1}$ generate independently the elementary abelian quotient 
$M/\Phi(M)$, and the elements $\widetilde{y}_0^{\,2},\ldots,\widetilde{y}_{2^k-1}^{\,2}$ together with the 
commutators $[\widetilde{y}_i,\widetilde{y}_j]$ for $0\le i<j\le 2^k-1$ generate independently the elementary abelian 
subgroup $\Phi(M)$. 
The
  latter can be verified by considering homomorphisms from $M$ onto
  groups of the form $C_2^{\, 2^k-1} \times C_{4}$ and
  $C_2^{\, 2^k -2} \times \mathrm{Heis}(\mathbb{F}_2)$, where
  $\mathrm{Heis}(\mathbb{F}_2)$ denotes the group of upper
  unitriangular $3 \times 3$ matrices over $\mathbb{F}_2$. 
Next consider the faithful action of the cyclic group $X\cong\langle \widetilde{x}\rangle\cong C_{2^k}$ induced by
$$
\widetilde{y}_i^{\,\widetilde{x}}=
\begin{cases}
\widetilde{y}_{i+1} & \text{if }0\le i\le 2^k-2,\\
\widetilde{y}_0     & \text{if }i=2^{k}-1.
\end{cases}
$$
We define $\widetilde{G}_k=X\ltimes M$ and observe that $\log_2|\widetilde{G}_k|=k+2^{k+1}+\binom{2^k}{2}$.
Furthermore, it is easy to see that $\widetilde{G}_k/\Phi(M)\cong W_k$.
Thus, there is an epimorphism $\epsilon:G_k\rightarrow\widetilde{G}_k$ with $\epsilon(x_k)=\widetilde{x}$ and $\epsilon(y_k)=\widetilde{y}_0$, and since $|G_k|\le|\widetilde{G}_k|$ we conclude that $G_k\cong\widetilde{G}_k.$
\end{proof}

\begin{remark}
\label{remark derived subgroup}
 The proof of Proposition~\ref{proposition order of Gk} shows that $H_k^{\,2}=Z_k$ and, consequently, that $H^2=Z$. In particular, the exponent of $H_k$, and of $H$, is $4$. 
\end{remark}

Our next aim is to compute the lower central series of the groups $G_k$, and therefore of $G$. For that purpose, we recall the convenient notation introduced in~\cite{HK}, 
which will be used frequently in the paper.
We will denote $c_1=y$ and $c_i=[y,x,\overset{i-1}{\ldots},x]$ for $i\in\mathbb{N}_{\ge 2}$.
Similarly, we set
$c_{i,j}=[c_i,y,x,\overset{j-1}{\ldots},x]$ for every $j\in\mathbb{N}$.
For the sake of simplicity, for $k\in\N$, we will use the same notation for the corresponding elements in the group $G_k$. Thus, we will also write $c_1=y_k$, $c_i=[y_k,x_k,\overset{i-1}{\ldots},x_k]$,  
and $c_{i,j}=[c_i,y_k,x_k,\overset{j-1}{\ldots},x_k]$ for every $i\in\mathbb{N}_{\ge 2}$ and $j\in\mathbb{N}$, when working in the groups $G_k$, for $k\in\mathbb{N}$. It will be clear from the context whether our considerations apply to $G$ or $G_k$.

\begin{lemma}\label{exp-2}
 In the group $G_k$, for $k\in\mathbb{N}$,
 \begin{enumerate}
 \item[(i)] for $i\ge 2^{k-1}+1$,  we have 
 $c_i^{\,2}\in \gamma_{i+1}(G_k)$;
 \item[(ii)] for $i\ge 2^{k}+1$, we have $c_i^{\,2}=1$;
 \item[(iii)] for  $i\ge 2^k+2^{k-1}+1$, we have $c_i\in\gamma_{i+1}(G_k)$.
 \end{enumerate}
\end{lemma}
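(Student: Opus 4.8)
The plan is to reduce all three parts to a single question about the lower central depth of the square $y^2=c_1^{\,2}$ inside the elementary abelian, $\langle x\rangle$-invariant subgroup $Z_k$. Throughout I use that, by $[H,Z]=1$ together with Remark~\ref{remark derived subgroup}, the normal subgroup $H_k$ is nilpotent of class at most $2$ with $H_k'\le Z_k=H_k^{\,2}$ central and elementary abelian, and that modulo $Z_k$ the quotient $H_k/Z_k\cong B_k$ is the cyclic $\mathbb{F}_2[\langle x\rangle]$-module generated by $\bar y$, so that $\bar c_i=(t-1)^{i-1}\bar y$, where $t$ denotes conjugation by $x$ and $\mathbb{F}_2[\langle x\rangle]\cong\mathbb{F}_2[t]/((t-1)^{2^k})$ (recall $x^{2^k}=1$ in $G_k$). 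Since $(t-1)^{i-1}=0$ in this ring exactly when $i-1\ge 2^k$, we get $c_i\in Z_k$ if and only if $i\ge 2^k+1$; as $Z_k$ is elementary abelian this immediately yields part~(ii).

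For part~(i) I would first record the recursion obtained by squaring $c_i=[c_{i-1},x]=c_{i-1}^{-1}c_{i-1}^{\,x}$ inside the class-$2$ group $H_k$: since $Z_k$ is central and elementary abelian, one computes $c_i^{\,2}\equiv(t-1)c_{i-1}^{\,2}\pmod{\gamma_{i+1}(G_k)}$, the correction term $[c_i,c_{i-1}]$ lying in $\gamma_{2i-1}(G_k)\subseteq\gamma_{i+1}(G_k)$ for $i\ge 2$. Iterating gives $c_i^{\,2}\equiv(t-1)^{i-1}y^2\pmod{\gamma_{i+1}(G_k)}$, so that part~(i) for index $i-1$ forces part~(i) for index $i$: the statement $c_{i-1}^{\,2}\in\gamma_i(G_k)$ gives $(t-1)c_{i-1}^{\,2}=[c_{i-1}^{\,2},x]\in\gamma_{i+1}(G_k)$, whence $c_i^{\,2}\in\gamma_{i+1}(G_k)$. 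Thus (i) propagates upward, and the whole of (i) reduces to the single base case $c_{2^{k-1}+1}^{\,2}\in\gamma_{2^{k-1}+2}(G_k)$, equivalently $(t-1)^{2^{k-1}}y^2\in\gamma_{2^{k-1}+2}(G_k)$.

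To attack this base case I would use the characteristic-$2$ identity $(t-1)^{2^{k-1}}=t^{2^{k-1}}-1$, which on $Z_k$ reads $(t-1)^{2^{k-1}}w=w^{x^{2^{k-1}}}w$ for $w\in Z_k$; for $w=y^2$ this rewrites the target as $(y^{x^{2^{k-1}}})^2y^2=(y^{x^{2^{k-1}}}y)^2[y,y^{x^{2^{k-1}}}]$. The commutator factor is harmless: $y^{x^{2^{k-1}}}\equiv y\,c_{2^{k-1}+1}\pmod{Z_k}$, so $[y,y^{x^{2^{k-1}}}]=[y,c_{2^{k-1}+1}]$ has weight $2^{k-1}+2$. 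Part~(iii) I would treat in the same spirit: for $i\ge 2^k+1$ one has $c_i=(t-1)^{\,i-2^k-1}c_{2^k+1}$, so (iii) reduces to showing that $c_{2^k+1}$ is deep enough and that the same jump applies to its $(t-1)$-translates. Here the key input is to feed the relation $x^{2^k}=1$, i.e. $[x^{2^k},y]=1$, into Lemma~\ref{lemma commutator identities}: by Kummer's theorem the coefficients $\binom{2^k}{m}$ are even for $0<m<2^k$, and after dividing by $2$ only $m=2^{k-1}$ survives modulo squares, which should express $c_{2^k+1}$ through the single square $c_{2^{k-1}+1}^{\,2}$ (plus deeper terms) and so tie (iii) back to (i).

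The main obstacle is precisely the base case $(t-1)^{2^{k-1}}y^2\in\gamma_{2^{k-1}+2}(G_k)$, and it cannot be settled by commutator identities alone: the reduction already shows $(y^{x^{2^{k-1}}}y)^2\equiv c_{2^{k-1}+1}^{\,2}\equiv(t-1)^{2^{k-1}}y^2\pmod{\gamma_{2^{k-1}+2}(G_k)}$, so every such manipulation is circular. That a genuine threshold really occurs is visible from $G_k^{\mathrm{ab}}\cong C_4\times C_{2^k}$, which gives $y^2\notin\gamma_2(G_k)$ and hence depth $1$ for $y^2$; the content of (i) is the ``jump'' whereby $(t-1)^ly^2$ lies in $\gamma_{l+2}(G_k)$ for $l\ge 2^{k-1}$, against the generic membership in $\gamma_{l+1}(G_k)$. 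I would break the circularity by a non-self-referential computation of the filtration induced on the cyclic module $\langle y^2\rangle\cong\mathbb{F}_2[t]/((t-1)^{2^k})$, either by a dimension count of the graded pieces $\gamma_n(G_k)/\gamma_{n+1}(G_k)$ anchored by $|G_k|$ from Proposition~\ref{proposition order of Gk}, or by induction on $k$ through the epimorphisms $G_k\twoheadrightarrow G_{k-1}$ of the inverse system, under which $(t-1)^{2^{k-1}}y^2$ maps to $1$ and therefore lies in the kernel; identifying this kernel with a suitable term of the lower central series would then deliver the jump.
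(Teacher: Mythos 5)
Your treatment of (ii), your upward propagation for (i) (from $c_i^{\,2}\equiv[c_{i-1}^{\,2},x_k]$ modulo $\gamma_{2i-1}(G_k)$), and your intention to tie (iii) back to (i) via the relation $[x_k^{\,2^k},y_k]=1$ are all sound and close to what the paper does. But the proposal has a genuine gap: the base case $c_{2^{k-1}+1}^{\,2}\in\gamma_{2^{k-1}+2}(G_k)$, which you yourself flag as ``the main obstacle'', is never proved, and neither of your two repair strategies works as stated. The dimension count anchored by Proposition~\ref{proposition order of Gk} is circular within the paper's logic: in Proposition~\ref{proposition G_k lower central series} the count closes up precisely because Lemma~\ref{exp-2} supplies the upper bounds on the graded pieces $\gamma_n(G_k)/\gamma_{n+1}(G_k)$ (it is what allows generators to be discarded); without it the natural upper bounds sum to more than $\log_2|G_k|$, and a total count cannot locate where the collapse occurs. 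The induction through $G_k\twoheadrightarrow G_{k-1}$ fares no better: the kernel of that map contains the normal closure of $x_k^{\,2^{k-1}}$, which does not even lie in $\gamma_2(G_k)$, so the kernel cannot be identified with any term of the lower central series; and the statement one would actually need, namely that the $Z_k$-part of the kernel lies in $\gamma_{2^{k-1}+2}(G_k)$, is equivalent to the weight estimate you are trying to prove, so this route merely restates the problem.

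Moreover, your claim that the base case ``cannot be settled by commutator identities alone'' is false, and this is exactly where the paper's proof goes. The circularity you observe only afflicts manipulations built on $x_k^{\,2^{k-1}}$; the genuine, non-tautological input is the defining relation $x_k^{\,2^k}=1$ (i.e.\ $\langle a^{2^k}\rangle^{F_2}\le N_k$), fed into \eqref{equ:commutator-formula-2}. Since $\binom{2^k}{2^{k-1}}\equiv_4 2$ while $\binom{2^k}{j}\equiv_4 0$ for $1\le j\le 2^k-1$ with $j\neq 2^{k-1}$, and since $H_k$ has exponent $4$ and $[H_k,H_k]\le Z_k$ has exponent $2$, the expansion of $1=[y_k,x_k^{\,2^k}]$ collapses to the paper's identity \eqref{eq:sq-comm}:
\begin{equation*}
1\equiv c_{2^{k-1}+1}^{\,2}\,c_{2^k+1}\pmod{\gamma_{2^k+2}(G_k)},
\end{equation*}
whence $c_{2^{k-1}+1}^{\,2}\equiv c_{2^k+1}\in\gamma_{2^k+1}(G_k)\le\gamma_{2^{k-1}+2}(G_k)$ --- the base case, with room to spare. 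Ironically, this identity is already present in your own sketch of part (iii) (``only $m=2^{k-1}$ survives''); you simply never connected it back to (i). With that connection made, your propagation gives (i) for all $i\ge 2^{k-1}+1$, and (iii) follows as you indicate, since
$c_{2^k+2^{k-1}+1}\equiv[c_{2^{k-1}+1}^{\,2},x_k,\overset{2^{k-1}}{\ldots},x_k]\equiv c_{2^k+1}^{\,2}=1$ modulo $\gamma_{2^k+2^{k-1}+2}(G_k)$, using (ii).
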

\begin{proof} 
(i) Observe that $\binom{2^k}{2^{k-1}}\equiv_4 2$ and that $\binom{2^k}{j}\equiv_4 0$ for any $1\le j\le 2^{k}-1$ with $j\neq 2^{k-1}$. As $H_k$ has exponent $4$ and $[H_k,H_k] \le Z_k$ exponent $2$, it follows from~(\ref{equ:commutator-formula-2}) that
  \begin{equation}\label{eq:sq-comm}
  1=[y_k,x_k^{\,2^k}]\equiv [y_k,x_k,\overset{2^{k-1}}\ldots,x_k]^2[y_k,x_k,\overset{2^k}\ldots,x_k] \pmod{\gamma_{2^{k}+2}(G_k)}.
 \end{equation}
Therefore $c_{2^{k-1}+1}^{\,2}\equiv c_{2^k+1}\pmod{\gamma_{2^k+2}(G_k)}$.

For $i\ge 2^{k-1}+2$, notice that
\begin{equation*}
    \begin{split}
    c_{i}^{\,2}= [y_k,x_k,\overset{i-1}\ldots, x_k]^2&\equiv [c_{2^{k-1}+1}^{\,2},x_k,\overset{i-2^{k-1}-1}\ldots,x_k] \pmod{\gamma_{i+1}(G_k)}\\
    &\equiv [c_{2^{k}+1},x_k,\overset{i-2^{k-1}-1}\ldots,x_k] \pmod{\gamma_{i+1}(G_k)}. 
    \end{split}
\end{equation*}
Hence the result follows.

(ii) This follows immediately from the fact that $c_i\in Z_k$ for $i\ge 2^k+1$; compare~\cite[Prop.~2.6(1)]{KT}.

(iii) It suffices to prove the result for  $i=2^k+2^{k-1}+1$. 
From~(\ref{eq:sq-comm}), we obtain
\begin{align*}
c_i= [c_{2^k+1},x_k,\overset{2^{k-1}}{\ldots},x_k]
\equiv [c_{2^{k-1}+1}^{\,2},x_k,\overset{2^{k-1}}{\ldots},x_k]\pmod{\gamma_{i+1}(G_k)}.
\end{align*}

As 
$$
[c_{2^{k-1}+1}^{\,2},x_k,\overset{2^{k-1}}{\ldots},x_k]
\equiv
c_{2^k+1}^{\,2}\pmod{\gamma_{i+1}(G_k)}
$$
we have 
$c_i\equiv c_{2^{k}+1}^{\,2}\pmod{\gamma_{i+1}(G_k)},$
and by (ii), it follows that  $c_i\in\gamma_{i+1}(G_k)$, as required.
\end{proof}

Let us write $z_{i,j}=[c_i,c_j]$ for every $i,j\in\N$.
From~\cite[Prop.~2.6(1)]{KT} we have $H=\langle c_n\mid n\in \mathbb{N}\rangle$, and from Remark~\ref{remark derived subgroup} we deduce that
$$
Z=\langle c_n^{\,2},z_{i,j}\mid  n,i,j\in\mathbb{N}   \text{ with } j<i \rangle.
$$

We recall the following result from~\cite[Lem.~4.3]{HK}, which although was written in the setting of the pro-$p$ group constructed in~\cite{HK}, for $p$ an odd prime, the same proof holds for our group~$G$.
 
\begin{lemma}
\label{lemma double product}
In the group~$G$, for $i,j\in \mathbb{N}$ we have
$$
[z_{i,j},x,\overset{k}{\ldots},x]=\prod_{s=0}^{k}\left(\prod_{n=0}^{s}z_{i+k-n,j+k-s+n}^{ \binom{k}{s}\binom{s}{n}}\right)\quad \text{for every }k\in \mathbb{N}_0.
$$
\end{lemma}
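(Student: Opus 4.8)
The plan is to reduce everything to how conjugation by $x$ acts on the abelian group $Z$, and then to iterate. First I would record the structural facts: since $[H,Z]=1$ and $Z$ is (elementary) abelian, the group $H$ is nilpotent of class at most $2$ with $H'\le Z$ central in $H$, so the commutator map is bi-multiplicative on $H$, that is, $[uv,w]=[u,w][v,w]$ and $[u,vw]=[u,v][u,w]$ for $u,v,w\in H$. Combining this with the defining identity $c_a^{\,x}=c_a c_{a+1}$ (which merely restates $[c_a,x]=c_{a+1}$), I would compute, for all $a,b\in\N$ and noting $z_{a,b}=[c_a,c_b]\in[H,H]\le Z$,
\[
z_{a,b}^{\,x}=[c_ac_{a+1},c_bc_{b+1}]=z_{a,b}\,z_{a,b+1}\,z_{a+1,b}\,z_{a+1,b+1},
\]
and hence the base relation
\[
\delta(z_{a,b}):=[z_{a,b},x]=z_{a,b+1}\,z_{a+1,b}\,z_{a+1,b+1}.
\]
Since $Z\trianglelefteq G$ is abelian, $\delta=[\,\cdot\,,x]$ is a group endomorphism of $Z$, and by definition $[z_{i,j},x,\overset{k}{\ldots},x]=\delta^{k}(z_{i,j})$, so the whole statement becomes a computation of the iterate $\delta^k$.

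For the iteration I would avoid any worry about relations among the $z_{a,b}$ by lifting to a free model. Let $\mathcal A$ be the free abelian group on symbols $\{\zeta_{a,b}\mid a,b\in\N\}$, written additively; let $S_1,S_2$ be the commuting shift endomorphisms with $S_1\zeta_{a,b}=\zeta_{a+1,b}$ and $S_2\zeta_{a,b}=\zeta_{a,b+1}$; and let $\pi\colon\mathcal A\to Z$ be the homomorphism $\zeta_{a,b}\mapsto z_{a,b}$ (with $Z$ written multiplicatively). The base relation says precisely that $\pi\circ\widehat\delta=\delta\circ\pi$ on generators, where $\widehat\delta:=S_1+S_2+S_1S_2$; as both sides are homomorphisms this extends to all of $\mathcal A$, whence $\delta^{k}\circ\pi=\pi\circ\widehat\delta^{\,k}$ for every $k$ by induction. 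Thus $\delta^{k}(z_{i,j})=\pi\big(\widehat\delta^{\,k}\zeta_{i,j}\big)$, and it remains only to expand $\widehat\delta^{\,k}$ as a formal operator.

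The core computation is then the polynomial identity in $\Z[S_1,S_2]$
\[
(S_1+S_2+S_1S_2)^{k}=\sum_{s=0}^{k}\sum_{n=0}^{s}\binom{k}{s}\binom{s}{n}\,S_1^{\,k-n}S_2^{\,k-s+n},
\]
which I would obtain by writing $S_1+S_2+S_1S_2=S_2+S_1(1+S_2)$, applying the binomial theorem (the two summands commute), expanding $(1+S_2)^{s}$, and reindexing via the subset-of-a-subset identity $\binom{k}{s}\binom{s}{n}=\binom{k}{n}\binom{k-n}{s-n}$ to bring the monomials into the asserted shape. Applying $\pi$ to $\widehat\delta^{\,k}\zeta_{i,j}$ then yields exactly the claimed product, since $S_1^{\,k-n}S_2^{\,k-s+n}\zeta_{i,j}=\zeta_{i+k-n,\,j+k-s+n}$. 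I expect the only real obstacle to be this last bookkeeping: matching the two natural parametrisations of the monomials in $\widehat\delta^{\,k}$ (the one delivered by the binomial theorem versus the one in the statement) is purely a binomial-coefficient identity, but it must be checked carefully to confirm the exponents $\binom{k}{s}\binom{s}{n}$ are correct. An alternative that avoids the formal lift is a direct induction on $k$: apply $\delta$ term-by-term to the inductive formula using the base relation and collect exponents, which reduces to the same combinatorial identity through a Pascal-type recursion.
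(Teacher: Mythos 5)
Your proof is correct. For comparison: the paper does not actually prove this lemma --- it imports it from \cite[Lem.~4.3]{HK}, remarking that the argument there (written for odd $p$) applies verbatim to $G$; that argument is essentially the direct one you sketch as an alternative at the end, namely an induction on $k$ driven by the one-step relation and Pascal-type collection of binomial coefficients. Your main route shares the two essential ingredients --- the base relation $[z_{a,b},x]=z_{a+1,b}\,z_{a,b+1}\,z_{a+1,b+1}$, correctly derived from $c_a^{\,x}=c_ac_{a+1}$ and bi-multiplicativity of commutators in the class-$2$ group $H$ (you rightly ground this in $H'\le Z$ and $[H,Z]=1$), and the fact that $\delta=[\,\cdot\,,x]$ is an endomorphism of the abelian normal subgroup $Z$, so that the iterated commutator is $\delta^k$ --- but it outsources the combinatorics to the identity
\[
(S_1+S_2+S_1S_2)^{k}=\sum_{s=0}^{k}\sum_{n=0}^{s}\binom{k}{s}\binom{s}{n}S_1^{\,k-n}S_2^{\,k-s+n}
\]
in $\Z[S_1,S_2]$, which is indeed valid: writing the left side as $\bigl(S_2+S_1(1+S_2)\bigr)^k$, expanding, and reindexing by $m=k-n$, $t=k-s$ turns your expansion into the asserted double sum, the coefficients matching because $\binom{k}{n}\binom{k-n}{s-n}=\binom{k}{s}\binom{s}{n}$. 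The free abelian lift $\pi\colon\mathcal{A}\to Z$ is a sensible precaution rather than an extravagance: it lets you manipulate the symbols $z_{a,b}$ formally even though they satisfy relations in $Z$ (e.g.\ $z_{a,b}=z_{b,a}$ and $z_{a,a}=1$), which is exactly the point where an unguarded ``formal'' computation could be challenged. The trade-off is clear: the inductive route needs no auxiliary object but threads the binomial bookkeeping through an induction, whereas your route does that bookkeeping once, in a commutative polynomial ring; as a bonus your argument is visibly independent of the parity of the prime, which is consistent with (and explains) the paper's remark that the odd-$p$ proof of \cite{HK} transfers to $G$ unchanged.
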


\begin{corollary}
\label{corollary p^k}
In the group~$G$, for $i,j\in \mathbb{N}$ we have
$$
[z_{i,j},x^{2^k}]=z_{i+{2^k},j}z_{i,j+2^k}z_{i+2^k,j+2^k}\quad \text{for every }k\in\mathbb{N}_0.
$$
\end{corollary}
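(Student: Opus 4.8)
The plan is to deduce the corollary from Lemma~\ref{lemma double product} in two steps: first replace the single power $x^{2^k}$ by a $2^k$-fold iterated commutator, and then evaluate the resulting double product modulo~$2$. Concretely, I would establish the two identities
\[
[z_{i,j},x^{2^k}]=[z_{i,j},x,\overset{2^k}{\ldots},x]
\quad\text{and}\quad
[z_{i,j},x,\overset{2^k}{\ldots},x]=z_{i+2^k,j}\,z_{i,j+2^k}\,z_{i+2^k,j+2^k},
\]
the first being a Frobenius-type reduction in characteristic~$2$ and the second being the specialisation of Lemma~\ref{lemma double product} to $2^k$ commutators.

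For the first identity I would exploit that $Z$ is elementary abelian (Remark~\ref{remark derived subgroup}) and normal in~$G$, so I may regard it as an $\mathbb{F}_2$-vector space on which conjugation by~$x$ acts as an automorphism~$\sigma$. Since $z_{i,j}\in Z$ and $Z\trianglelefteq G$ is abelian, the identity $[uv,x]=[u,x]^{v}[v,x]=[u,x][v,x]$ shows that $D\colon z\mapsto[z,x]$ is an $\mathbb{F}_2$-linear endomorphism of~$Z$ with $\sigma=1+D$ (written additively). As the powers of~$D$ commute, the freshman's dream in characteristic~$2$ gives $\sigma^{2^k}=(1+D)^{2^k}=1+D^{2^k}$, because every intermediate binomial coefficient $\binom{2^k}{m}$ with $0<m<2^k$ is even and $Z$ has exponent~$2$. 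Hence $[z_{i,j},x^{2^k}]=z_{i,j}^{-1}\sigma^{2^k}(z_{i,j})=D^{2^k}(z_{i,j})=[z_{i,j},x,\overset{2^k}{\ldots},x]$; the same conclusion can alternatively be reached from~(\ref{equ:commutator-formula-2}), observing that the correction subgroup $K$ acts trivially on~$Z$ and that all intermediate exponents vanish mod~$2$.

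For the second identity I would set the iteration length in Lemma~\ref{lemma double product} equal to $2^k$ and discard every factor whose exponent $\binom{2^k}{s}\binom{s}{n}$ is even. By Lucas' theorem $\binom{2^k}{s}$ is odd exactly when $s\in\{0,2^k\}$; the case $s=0$ forces $n=0$ and contributes $z_{i+2^k,j+2^k}$, while the case $s=2^k$ has exponent $\binom{2^k}{2^k}\binom{2^k}{n}=\binom{2^k}{n}$, which is odd exactly for $n\in\{0,2^k\}$, contributing $z_{i+2^k,j}$ and $z_{i,j+2^k}$. These three surviving factors are precisely the claimed product, and since $Z$ is abelian their order is immaterial.

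I expect the only delicate points to be bookkeeping rather than conceptual: one must track the index shifts $i+2^k-n$ and $j+2^k-s+n$ correctly through the two edge cases of the double product, and (if one prefers the route via~(\ref{equ:commutator-formula-2})) verify that the modulus $K(x,[x,z_{i,j}])$ collapses because every commutator of weight at least~$2$ in $[x,z_{i,j}]\in Z$ lies in $[Z,Z]=1$. The characteristic-$2$ collapse of the binomial coefficients is what renders both identities clean, and it is the single feature I would rely on throughout.
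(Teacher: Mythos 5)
Your proof is correct and follows essentially the same route as the paper: the paper's own (very terse) proof likewise reduces $[z_{i,j},x^{2^k}]$ to the iterated commutator $[z_{i,j},x,\overset{2^k}{\ldots},x]$ via~(\ref{equ:commutator-formula-2}) (your linear-algebra argument with $\sigma=1+D$ is just a repackaging of that identity in characteristic~$2$, as you note yourself) and then applies Lemma~\ref{lemma double product}, discarding all factors whose binomial exponents are even. Your bookkeeping of the three surviving terms is accurate, so there is nothing to fix.
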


\begin{proof}
As was reasoned in~\cite{HK}, this result follows directly from~(\ref{equ:commutator-formula-2}) and Lemma~\ref{lemma double product}.
\end{proof}

\begin{lemma}
\label{lemma c_{n,2^k}}
Let $k\in\mathbb{N}$. In the group~$G_k$, for   $m\in \mathbb{N}$ even, we have
$$
c_{m,2^k}\in\gamma_{2^k+m+1}(G_k).
$$
\end{lemma}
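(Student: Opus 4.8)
The plan is to expand $c_{m,2^k}$ as a product of the basic commutators $z_{a,b}=[c_a,c_b]$ using Lemma~\ref{lemma double product}, and then to cancel all surviving factors modulo $\gamma_{2^k+m+1}(G_k)$ by exploiting the $2$-torsion and the symmetry of the $z_{a,b}$.

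First I would note that $c_{m,2^k}=[c_m,y,x,\overset{2^k-1}{\ldots},x]=[z_{m,1},x,\overset{2^k-1}{\ldots},x]$, since $[c_m,y]=[c_m,c_1]=z_{m,1}$. Applying Lemma~\ref{lemma double product} with $\ell=2^k-1$ writes this as $\prod_{s=0}^{2^k-1}\prod_{n=0}^{s}z_{m+2^k-1-n,\,2^k-s+n}^{\binom{2^k-1}{s}\binom{s}{n}}$. Since $z_{a,b}\in\gamma_{a+b}(G_k)$ and the factor $z_{m+2^k-1-n,\,2^k-s+n}$ has weight $m+2^{k+1}-1-s$, only the top layer $s=2^k-1$ can contribute modulo $\gamma_{2^k+m+1}(G_k)$; every other layer already lies in $\gamma_{2^k+m+1}(G_k)$. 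This leaves
\[
c_{m,2^k}\equiv\prod_{n=0}^{2^k-1}z_{m+2^k-1-n,\,n+1}^{\binom{2^k-1}{n}}\pmod{\gamma_{2^k+m+1}(G_k)}.
\]
Because $z_{a,b}$ lies in the elementary abelian group $Z$ (so has order at most $2$) and $\binom{2^k-1}{n}$ is odd for every $n$ by Lucas' theorem, all the exponents may be taken to be $1$.

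Next I would discard the trivial factors. As $c_i\in Z_k$ for $i\ge 2^k+1$ and $[H_k,Z_k]=1$, the factor $z_{a,b}$ vanishes whenever $\max(a,b)>2^k$; this restricts the product to the indices $a,b\in\{m,\dots,2^k\}$ with $a+b=2^k+m$ (in particular the product is empty, hence trivial, once $m>2^k$). On this symmetric index set I would pair the factor for $n$ with the one for $n':=2^k+m-2-n$: using $z_{b,a}=z_{a,b}^{-1}=z_{a,b}$, these two factors are equal, so their joint contribution squares away. The only unpaired index is the fixed point $n=(2^k+m)/2-1$, which exists precisely because $m$ is even and yields the diagonal factor $z_{(2^k+m)/2,(2^k+m)/2}=1$. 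Hence the whole product is trivial and $c_{m,2^k}\in\gamma_{2^k+m+1}(G_k)$.

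The main obstacle is the bookkeeping in the last step: one must check that the weight cut-off isolates exactly the layer $s=2^k-1$, that the surviving index set $\{(a,b):a+b=2^k+m,\ m\le a,b\le 2^k\}$ is genuinely symmetric under swapping coordinates, and that the pairing involution $n\mapsto 2^k+m-2-n$ is fixed-point-free off the diagonal. The combination of the two sources of $2$-torsion — odd binomial coefficients disappearing and $z_{a,b}=z_{b,a}$ — together with the vanishing of $z_{a,b}$ for indices exceeding $2^k$, is what forces the complete cancellation.
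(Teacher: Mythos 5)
Your proof is correct and follows essentially the same route as the paper's: both rewrite $c_{m,2^k}$ as $[z_{m,1},x_k,\overset{2^k-1}{\ldots},x_k]$, apply Lemma~\ref{lemma double product} and the weight filtration to isolate the layer $s=2^k-1$, use that $Z_k$ has exponent~$2$ together with the oddness of the $\binom{2^k-1}{n}$, kill the factors with an index exceeding $2^k$, and cancel the survivors in symmetric pairs via $z_{a,b}=z_{b,a}$, with the evenness of $m$ placing the unpaired fixed point on the vanishing diagonal. The only difference is cosmetic: the paper reindexes the surviving product as $\prod_{n=0}^{2^k-m}z_{2^k-n,\,m+n}$ before pairing, whereas you pair in the original indexing via the involution $n\mapsto 2^k+m-2-n$.
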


\begin{proof}
Note that
$$
c_{m,2^k}=[z_{m,1},x_k,\overset{2^{k}-1}{\ldots},x_k],
$$
and since  $z_{i,j}\in\gamma_{i+j}(G_k)$ for every $i,j\in \mathbb{N}$, we have by Lemma~\ref{lemma double product} that
$$
[z_{m,1},x_k,\overset{2^k-1}{\ldots},x_k]\equiv\prod_{n=0}^{2^{k}-1}z_{m+2^k-1-n,1+n}^{\binom{2^k-1}{n}}\pmod{\gamma_{2^k+m+1}(G_k)}.
$$
In addition, the exponent of~$Z_k$ is~$2$ by construction, so since all the binomial numbers~$\binom{2^k-1}{n}$ are odd, we get
$$
[z_{m,1},x_k,\overset{2^k-1}{\ldots},x_k]\equiv\prod_{n=0}^{2^{k}-1}z_{m+2^k-1-n,1+n}\pmod{\gamma_{2^k+m+1}(G_k)}.
$$
Recall that $c_i\in Z_k$ for every $i\ge 2^k+1$ by~\cite[Prop.~2.6(1)]{KT}, so $z_{m+2^k-1-n,1+n}=1$ for all $n\le m-2$. Thus,
\begin{align*}
[z_{m,1},x_k,\overset{2^k-1}{\ldots},x_k]&\equiv\prod_{n=m-1}^{2^{k}-1}z_{m+2^k-1-n,1+n}\pmod{\gamma_{2^k+m+1}(G_k)}\\
&\equiv\prod_{n=0}^{2^{k}-m}z_{2^k-n,m+n}\pmod{\gamma_{2^k+m+1}(G_k)}.
\end{align*}
As $z_{i,j}=z_{j,i}$ for all $i,j\in \mathbb{N}$ and since $m$ is even, we finally obtain
$$
[z_{m,1},x_k,\overset{2^k-1}{\ldots},x_k]\equiv 1\pmod{\gamma_{2^k+m+1}(G_k)},
$$
as required.
\end{proof}

\begin{proposition}
\label{proposition G_k lower central series}
For $k\in\N$,
the nilpotency class of $G_k$ is $2^{k+1}-1$ and the lower central series of $G_k$ satisfies:
\begin{itemize}
    \item $\gamma_1(G_k)=G_k=\langle x_k,y_k\rangle\gamma_2(G_k)$ with
    $$
    \gamma_1(G_k)/\gamma_2(G_k)\cong C_{2^k}\times C_4.
    $$
    \item If $2\le i\le 2^{k}$, then
    $$
    \gamma_i(G_k)=
    \begin{cases}
    \langle c_i,c_{2,i-2},c_{4,i-4},\ldots, c_{i-2,2}\rangle\gamma_{i+1}(G_k) & \text{if }i\equiv_2 0,\\
    \langle c_{i},c_{2,i-2},c_{4,i-4},\ldots, c_{i-1,1}\rangle\gamma_{i+1}(G_k)& \text{if }i\equiv_2 1,\\
    \end{cases}
    $$
    with
    \begin{equation}
    \label{eq 1}
    \gamma_i(G_k)/\gamma_{i+1}(G_k)\cong
    \begin{cases}
    C_4\times C_2\times\overset{(i-2)/2}{\dots}\times C_2 &\text{if } 2\le i\le 2^{k-1}\text{ and }i\equiv_2 0,\\
    C_4\times C_2\times\overset{(i-1)/2}{\dots}\times C_2 &\text{if } 2\le i\le 2^{k-1}\text{ and }i\equiv_2 1,\\
     C_2\times\overset{i/2}{\dots}\times C_2& \text{if }2^{k-1}+1\le i\le 2^{k}\text{ and }i\equiv_2 0,\\
     C_2\times\overset{(i+1)/2}{\dots}\times C_2& \text{if }2^{k-1}+1\le i\le 2^{k}\text{ and }i\equiv_2 1.
    \end{cases}
    \end{equation}

    \item If $2^k+1\le i\le 2^k+2^{k-1}$, then
    \begin{align*}
    &\gamma_i(G_k)=\\
    &
    \begin{cases}
    \langle c_i, c_{i-2^k+2,2^{k}-2},c_{i-2^k+4,2^{k}-4},\ldots,c_{2^{k}-2,i-2^{k}+2} , c_{2^k,i-2^k}\rangle\gamma_{i+1}(G_k) &\text{if }i\equiv_2 0,\\
    \langle c_i,  c_{i-2^k+1,2^{k}-1},c_{i-2^k+3,2^{k}-3},\ldots, c_{2^{k}-2,i-2^{k}+2} , c_{2^{k},i-2^{k}} \rangle\gamma_{i+1}(G_k)&\text{if }i\equiv_2 1,
    \end{cases}
    \end{align*}
    with
    \begin{equation}
    \label{eq 2}
    \gamma_{i}(G_k)/\gamma_{i+1}(G_k)\cong \begin{cases}
    C_2\times\overset{(2^{k+1}-i+2)/2}{\dots}\times C_2 &\text{if }i\equiv_2 0,\\
    C_2\times\overset{(2^{k+1}-i+3)/2}{\dots}\times C_2 &\text{if }i\equiv_2 1.
    \end{cases}
    \end{equation}
    
    \item If $2^k+2^{k-1}+1\le i\le 2^{k+1}$, then
    \begin{align*}
    &\gamma_i(G_k)=\\
    &\begin{cases}\langle c_{i-2^k+2,2^k-2},c_{i-2^k+4,2^k-4},\ldots, c_{2^{k}-2,i-2^{k}+2}, c_{2^k,i-2^k}\rangle\gamma_{i+1}(G_k) &\text{if }i\equiv_2 0,\\
    \langle c_{i-2^k+1,2^k-1},c_{i-2^k+3,2^k-3},\ldots,c_{2^{k}-2,i-2^{k}+2}, c_{2^k,i-2^k}\rangle\gamma_{i+1}(G_k) & \text{if }i\equiv_2 1,
    \end{cases}
    \end{align*}
    with
    \begin{equation}
    \label{eq 3}
    \gamma_{i}(G_k)/\gamma_{i+1}(G_k)\cong \begin{cases}
    C_2\times\overset{(2^{k+1}-i)/2}{\dots}\times C_2 &\text{if }i\equiv_2 0,\\
    C_2\times\overset{(2^{k+1}-i+1)/2}{\dots}\times C_2 &\text{if }i\equiv_2 1.
    \end{cases}
    \end{equation}
\end{itemize}
\end{proposition}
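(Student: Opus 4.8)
The plan is to build the lower central series of $G_k$ out of two independent ingredients, exploiting the normal subgroup $Z_k$. Since $G_k/Z_k\cong W_k$ and $[H_k,Z_k]=1$, each layer $\gamma_i(G_k)/\gamma_{i+1}(G_k)$ fits, via the projection $G_k\to W_k$ and the modular law, into a short exact sequence
$$
1\to (Z_k\cap\gamma_i(G_k))/(Z_k\cap\gamma_{i+1}(G_k))\to \gamma_i(G_k)/\gamma_{i+1}(G_k)\to \gamma_i(W_k)/\gamma_{i+1}(W_k)\to 1.
$$
I would therefore compute the two outer terms separately and then reassemble them, reading off the isomorphism type and the nilpotency class at the end.

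For the right-hand term I would use that $B_k$ is a free rank-one module over $\F_2[\langle\dot x_k\rangle]\cong\F_2[t]/(t^{2^k})$, where $t=\dot x_k-1$, so that $\gamma_i(W_k)=t^{\,i-1}B_k$ for $2\le i\le 2^k$ and $\gamma_{2^k+1}(W_k)=1$. Hence $\gamma_i(W_k)/\gamma_{i+1}(W_k)\cong C_2$ is generated by the image of $c_i$ for $2\le i\le 2^k$, and is trivial for $i>2^k$; this is exactly the single \emph{top} generator $c_i$ appearing in the first two ranges of the statement and its disappearance once $i>2^k$. The left-hand term is governed by the elementary abelian subgroup $Z_k=\langle c_n^{\,2},z_{i,j}\rangle$. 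Here the relevant inputs are that $z_{i,j}\in\gamma_{i+j}(G_k)$ with $z_{i,j}=z_{j,i}$ and $z_{i,i}=1$; that $z_{i,j}=1$ whenever $i>2^k$ or $j>2^k$, because then $c_i\in Z_k$ and $[Z_k,H_k]=1$; and the precise location of the squares and heavy commutators supplied by Lemmas~\ref{exp-2} and~\ref{lemma c_{n,2^k}}, with Lemma~\ref{lemma double product} and Corollary~\ref{corollary p^k} used throughout to rewrite the $c_{a,b}$ in terms of the $z_{i,j}$ of the same weight.

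The $C_4$ factors are then determined by the order of the image of $c_i$ in $\gamma_i/\gamma_{i+1}$. By Lemma~\ref{exp-2}(i) we have $c_i^{\,2}\in\gamma_{i+1}(G_k)$ for $i\ge 2^{k-1}+1$, so this image has order $2$; for $2\le i\le 2^{k-1}$ I would instead show $c_i^{\,2}\notin\gamma_{i+1}(G_k)$, whence the image has order $4$ (the exponent of $H_k$ being $4$) and contributes the $C_4$ in~\eqref{eq 1} (the first layer $i=1$ is the abelianization $G_k^{\mathrm{ab}}\cong C_{2^k}\times C_4$, with $x$ of order $2^k$ and the $C_4$ again from $y=c_1$, computed directly). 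The three internal thresholds then have transparent origins: the split at $2^{k-1}$ comes from the squares via Lemma~\ref{exp-2}(i); the split at $2^k$ from the vanishing of the $W_k$-image; and the split at $2^k+2^{k-1}$ from Lemma~\ref{exp-2}(iii), which for $i\ge 2^k+2^{k-1}+1$ places $c_i$ itself in $\gamma_{i+1}(G_k)$ and so deletes it from the generating set, leaving only $c_{a,b}$-type generators in~\eqref{eq 3}. Finally the nilpotency class is read off from the top layer: the heaviest nontrivial element is $z_{2^k,\,2^k-1}$ of weight $2^{k+1}-1$, while the only weight-$2^{k+1}$ candidate is $z_{2^k,2^k}=1$, so $\gamma_{2^{k+1}}(G_k)=1$ and the class is $2^{k+1}-1$.

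The main obstacle will be the exact bookkeeping of a basis for $(Z_k\cap\gamma_i(G_k))/(Z_k\cap\gamma_{i+1}(G_k))$ in each weight. One must prove that the elements listed in the statement — the $c_{a,b}$ with even first index, the class of $c_i^{\,2}$ inside the $C_4$, and, in the middle range, $c_i$ for $2^k<i\le 2^k+2^{k-1}$ — form a simultaneously generating and $\F_2$-linearly independent set. Generation uses the expansions of Lemma~\ref{lemma double product} and Corollary~\ref{corollary p^k} to eliminate the odd-first-index $c_{a,b}$ via the symmetry $z_{i,j}=z_{j,i}$; independence, and in particular the assertion that the class of $c_i^{\,2}$ is not absorbed by the $z$-classes (so that the decomposition is genuinely $C_4\times C_2\times\cdots$), is the delicate point. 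The boundary weights, where the admissible ranges of $i$ and $j$ in $z_{i,j}$ begin to truncate against $2^k$, are where miscounting is easiest; there the safest route to the lower bounds on the layer orders is to exhibit explicit homomorphisms of $G_k$ onto controlled finite $2$-groups, exactly as in the proof of Proposition~\ref{proposition order of Gk}.
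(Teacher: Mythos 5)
Your decomposition of each layer into its $W_k$-image and its $Z_k$-part is sound, and your generation (upper-bound) half is essentially a repackaging of the paper's argument: where you eliminate the odd-first-index $c_{a,b}$ by expanding everything into the $z_{m,n}$ via Lemma~\ref{lemma double product} and Corollary~\ref{corollary p^k} and invoking $z_{m,n}=z_{n,m}$, the paper reaches the same conclusion by a Hall--Witt induction. The genuine gap is the other half, the lower bounds: the linear independence of the listed generators in each layer, the claim $c_i^{\,2}\notin\gamma_{i+1}(G_k)$ for $2\le i\le 2^{k-1}$, and the nontriviality of $z_{2^k,2^k-1}$ (which your computation of the nilpotency class silently assumes). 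You defer all of this to ``explicit homomorphisms of $G_k$ onto controlled finite $2$-groups, exactly as in the proof of Proposition~\ref{proposition order of Gk}'', but this is not a routine adaptation and you never construct them. The homomorphisms in that proof (onto $C_2^{\,2^k-1}\times C_4$ and $C_2^{\,2^k-2}\times\mathrm{Heis}(\F_2)$) certify independence of elements inside $\Phi(M)$, i.e.\ inside $Z_k$ as a whole; here you need something strictly stronger, namely independence graded by weight: no nontrivial product of the weight-$i$ generators may fall into $\gamma_{i+1}(G_k)$. A homomorphism witnessing this must kill $\gamma_{i+1}(G_k)$ while keeping the relevant elements alive, and such targets are exactly the quotients $G_k/\gamma_{i+1}(G_k)$ you are trying to compute --- so, as stated, the plan is circular, and filling it in would amount to redoing the entire computation inside the explicit model $\widetilde{G}_k$.

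The idea you are missing is the paper's counting argument, which makes all the lower bounds come for free: the generation step yields upper bounds for $\log_2|\gamma_i(G_k):\gamma_{i+1}(G_k)|$ for every $i$, and these upper bounds sum to exactly $k+2^{k+1}+\binom{2^k}{2}$, which by Proposition~\ref{proposition order of Gk} equals $\log_2|G_k|$. Since the true layer sizes also sum to $\log_2|G_k|$ and each is bounded above by the corresponding term, every upper bound must be attained; this proves the isomorphisms (\ref{eq 1})--(\ref{eq 3}) all at once, and in particular gives $c_i^{\,2}\notin\gamma_{i+1}(G_k)$ in the range you left open and $\gamma_{2^{k+1}-1}(G_k)\neq 1$. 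With this single observation your outline closes up into a correct proof; without it, the ``delicate point'' you flag is not a technical footnote but precisely the unproven core of the proposition.
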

\begin{proof}
The first assertion is obvious.
To prove the other ones, we start by showing that
$$\gamma_i(G_k)=
\begin{cases}
\langle c_i,c_{2,i-2},c_{4,i-4},\ldots, c_{i-2,2}\rangle\gamma_{i+1}(G_k) &\text{if }i\equiv_2 0,\\
\langle c_{i},c_{2,i-2},c_{4,i-4},\ldots, c_{i-1,1}\rangle\gamma_{i+1}(G_k) &\text{if }i\equiv_2 1.
\end{cases}
$$
We proceed by induction on $i$.
For $i=2$, we have $\gamma_2(G_k)=\langle c_2\rangle\gamma_3(G_k)$ and for $i=3$ we have $\gamma_3(G_k)=\langle c_3,c_{2,1}\rangle\gamma_4(G_k)$.
Assume then that $i>3$, that $i$ is even, and that
$$\gamma_{i-2}(G_k)=\langle c_{i-2},c_{2,i-4},c_{4,i-6},\ldots, c_{i-4,2}\rangle\gamma_{i-1}(G_k)$$
and
$$\gamma_{i-1}(G_k)=\langle c_{i-1},c_{2,i-3},c_{4,i-5},\ldots, c_{i-2,1}\rangle\gamma_{i}(G_k).$$
Note that $c_{n,m}\in [H_k,H_k]\le Z_k$ for all $n,m\in \mathbb{N}$, so $[c_{n,m},y_k]=1$.
Thus, 
$$\gamma_i(G_k)=\langle c_i,c_{i-1,1},c_{2,i-2},c_{4,i-4},\ldots,c_{i-2,2}\rangle\gamma_{i+1}(G_k).$$
For convenience, we write
$$
M=\langle c_i,c_{2,i-2},c_{4,i-4},\ldots, c_{i-2,2}\rangle\gamma_{i+1}(G_k),
$$
and we have to check that $c_{i-1,1}\in M$.
Notice that $c_{i-1,1}=[c_{i-2},x_k,y_k]$, and the Hall-Witt identity yields
$$[c_{i-2},x_k,y_k][x_k,y_k,c_{i-2}][y_k,c_{i-2},x_k]\equiv 1\pmod{M}.$$
Note also that $[y_k,c_{i-2},x_k]\equiv c_{i-2,2}^{-1}\equiv 1\pmod{M}$, so 
$$c_{i-1,1}\equiv [c_{i-2},c_2]^{-1}\pmod{M}.$$
Let us prove that $[c_m,c_n]\equiv 1\pmod{M}$ for all $n\ge 2$ with $n\le m$ and $n+m=i$.
We argue by induction on $m-n$.
If $m-n=0$ then $n=m$ and $[c_m,c_n]=1$.
Now suppose that $m-n>0$, which, since $i$ is even, implies that $m-n\ge 2$.
Observe that $[c_m,c_n]=[c_{m-1},x_k,c_n]$, so again by the Hall-Witt identity,
$$[c_{m-1},x_k,c_n][x_k,c_n,c_{m-1}][c_n,c_{m-1},x_k]\equiv 1\pmod{M}.$$
Since $0\le(m-1)-(n+1)=m-n-2<m-n$, we have
$$
[x_k,c_n,c_{m-1}]\equiv[c_{m-1},c_{n+1}]\equiv 1\pmod{M}
$$
by induction.
Hence $[c_m,c_n]\equiv[c_n,c_{m-1},x_k]^{-1}\pmod{M}$.
As
$$
[c_n,c_{m-1}]\in\gamma_{i-1}(G_k)\cap Z_k,
$$
it follows that
$$
[c_n,c_{m-1}]\equiv c_{i-1}^{\,n_0}c_{2,i-3}^{\,n_2}c_{4,i-5}^{\,n_4}\cdots c_{i-2,1}^{\,n_{i-2}}\pmod{\gamma_i(G_k)}
$$
for some $n_0,n_2,\ldots,n_{i-2}\in\Z$. Hence
\[
 [c_m,c_n]\equiv[c_n,c_{m-1},x_k]\equiv c_{i}^{\,n_0}c_{2,i-2}^{\,n_2}c_{4,i-4}^{\,n_4}\cdots c_{i-2,2}^{\,n_{i-2}}\equiv 1\pmod{M},
\]
as we wanted to prove.
This, in particular, implies that $c_{i-1,1}\in M$, as claimed.
Hence, 
$$\gamma_i(G_k)=\langle c_i,c_{2,i-2},c_{4,i-4},\ldots, c_{i-2,2}\rangle\gamma_{i+1}(G_k)$$
and, again, since $[c_{n,m},y_k]=1$, we have
$$\gamma_{i+1}(G_k)=\langle c_{i+1},c_{2,i-1},c_{4,i-3},\ldots, c_{i-2,3},c_{i,1}\rangle\gamma_{i+2}(G_k).$$

Now, for $2^{k}+1\le i\le 2^{k+1}$, we add $x_k$ and $y_k$ to the commutators that inductively generate $\gamma_{i-1}(G_k)$ modulo~$\gamma_i(G_k)$. Removing the unnecessary generators according to Lemmata~\ref{exp-2} and~\ref{lemma c_{n,2^k}}, one can deduce that the generators of $\gamma_i(G_k)$ modulo~$\gamma_{i+1}(G_k)$ are precisely the stated ones. 

Finally, it suffices to check that the isomorphisms (\ref{eq 1}), (\ref{eq 2}) and (\ref{eq 3}) are satisfied.
The generators we have found for the terms of the lower central series, together with Lemma~\ref{exp-2}, show that any quotient of two consecutive terms of the lower central series is actually isomorphic to a quotient of the corresponding abelian group in the statement.
In particular, we obtain upper bounds for the logarithmic orders of the quotients of two consecutive terms of the lower central series.
In fact, all these upper bounds sum to the logarithmic order of $G_k$.
Indeed, we have
\begin{align*}
\sum_{i=1}^{2^{k+1}} \log_2&|\gamma_i(G_k):\gamma_{i+1}(G_k)|\\
&=(k+2) + \Big(2^{k-1}-1+\sum_{i=2}^{2^k}\lceil i/2\rceil \Big)+ \Big( 2^{k-1}+\sum_{i=2^k+1}^{2^{k+1}}\lceil(2^{k+1}-i)/2\rceil \Big) \\
&=k+2^{k+1}+\binom{2^k}{2},
\end{align*}
which is precisely, by Proposition~\ref{proposition order of Gk} the logarithmic order of $G_k$. Hence, isomorphisms (\ref{eq 1}), (\ref{eq 2}) and (\ref{eq 3})
are satisfied and the proof is finished.
\end{proof}

\begin{remark}
\label{remark index}
From Proposition~\ref{proposition G_k lower central series} we deduce that the logarithmic order of $Z/(\gamma_i(G)\cap Z)$ is
$$
2\Big(1+2+\cdots +\dfrac{i-1}{2}\Big)=2\binom{\nicefrac{(i+1)}{2}}{2}
$$
if $i$ is odd or
$$
2\Big(1+2+\cdots +\dfrac{i-2}{2}\Big)+\dfrac{i}{2}=2\binom{\nicefrac{i}{2}}{2}+\frac{i}{2}
$$
if $i$ is even.
\end{remark}

We include a result which highlights a further difference between the group $G$ and the group constructed in~\cite{HK}.

\begin{lemma} 
For $k\in\mathbb{N}$, the group $G_k$ has exponent $2^{k+2}$.
\end{lemma}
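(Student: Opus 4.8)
The plan is to prove the two inequalities $\exp(G_k)\mid 2^{k+2}$ and $\exp(G_k)\ge 2^{k+2}$ separately, working throughout inside the explicit model $\widetilde{G}_k=X\ltimes M\cong G_k$ furnished by the proof of Proposition~\ref{proposition order of Gk}. Recall that there $X=\langle\widetilde x\rangle\cong C_{2^k}$ acts by cyclically permuting the generators $\widetilde y_0,\ldots,\widetilde y_{2^k-1}$ of $M$, that $\Phi(M)=M^2[M,M]$ is central and elementary abelian, and that the squares $\widetilde y_i^{\,2}$ together with the commutators $[\widetilde y_i,\widetilde y_j]$ for $0\le i<j\le 2^k-1$ form an \emph{independent} generating set of $\Phi(M)$. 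Under the isomorphism $\epsilon$ of that proof one has $Z_k\mapsto\Phi(M)$, $x_k\mapsto\widetilde x$, $y_k\mapsto\widetilde y_0$, and $G_k/Z_k\cong\widetilde G_k/\Phi(M)\cong W_k$.

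For the upper bound I would first record that $\exp(W_k)=2^{k+1}$: an element $(b,\dot x_k^{\,j})$ of $W_k=B_k\rtimes\langle\dot x_k\rangle$ with $\dot x_k^{\,j}$ of order $2^m$ has its $2^m$-th power inside the elementary abelian base $B_k$, so its order divides $2^{m+1}\le 2^{k+1}$. Hence for any $g\in G_k$ the image in $G_k/Z_k\cong W_k$ shows $g^{2^{k+1}}\in Z_k$, and since $Z_k$ is elementary abelian by Remark~\ref{remark derived subgroup} we get $g^{2^{k+2}}=(g^{2^{k+1}})^2=1$, i.e. $\exp(G_k)\mid 2^{k+2}$.

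For the lower bound I would exhibit a single element of order $2^{k+2}$, namely $g=\widetilde x\,\widetilde y_0$ (equivalently $x_ky_k$ in $G_k$). A short induction using $\widetilde y_i^{\,\widetilde x}=\widetilde y_{i+1}$ and $\widetilde x^{2^k}=1$ gives $g^{2^k}=\widetilde y_{2^k-1}\widetilde y_{2^k-2}\cdots\widetilde y_0$, the norm element, which in particular lies outside $\Phi(M)$ (so $\overline g$ already has order $2^{k+1}$ in $W_k$). Squaring once more and using that $M$ has class $2$ with $\Phi(M)$ central, I would obtain
\[
g^{2^{k+1}}=\Big(\prod_{i=0}^{2^k-1}\widetilde y_i\Big)^{2}
=\prod_{i=0}^{2^k-1}\widetilde y_i^{\,2}\ \prod_{0\le i<j\le 2^k-1}[\widetilde y_j,\widetilde y_i].
\]
The right-hand side is the product of \emph{all} the distinguished independent generators of $\Phi(M)$, hence nontrivial. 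Thus $g$ has order exactly $2^{k+2}$, and with the upper bound this yields $\exp(G_k)=2^{k+2}$.

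The only genuinely delicate point is the nontriviality of $g^{2^{k+1}}$, and this is exactly where the independence of the generators of $\Phi(M)$ established in the proof of Proposition~\ref{proposition order of Gk} is indispensable: without it one could not exclude cancellation among the squares and the commutators. I would emphasise that passing to the model $\widetilde G_k$ is what makes the computation transparent; the same calculation can be run directly in $G_k$ via the collection formula~(\ref{equ:commutator-formula-1}), after which the surviving factor is $c_{2^k}^{\,2}$, but one then has to argue separately that the error term $K(x_k,y_k)$ does not absorb it, which is noticeably more cumbersome.
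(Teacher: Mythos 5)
Your proof is correct, and it takes a genuinely different route from the paper's. The paper argues entirely inside $G_k$ via the collection formula~(\ref{equ:commutator-formula-1}): since the error subgroup $K(x_k,y_k)\le[H_k,H_k]$ has exponent~$2$, the surviving factors of $(x_ky_k)^{2^k}$ are governed by the binomial coefficients $\binom{2^k}{j}$ modulo~$4$, giving $(x_ky_k)^{2^{k+1}}=c_{2^k}^{\,2}$, whose nontriviality is quoted from the proof of Proposition~\ref{proposition G_k lower central series}; the upper bound is then handled with a one-line ``similarly follows.'' You instead transport the whole question to the split model $\widetilde G_k=X\ltimes M$ of Proposition~\ref{proposition order of Gk}, where the lower bound becomes a transparent class-$2$ computation: $g^{2^k}$ is the norm element $\widetilde y_{2^k-1}\cdots\widetilde y_0$, and its square is the product of \emph{all} distinguished generators of $\Phi(M)$, nontrivial precisely because of the independence statement proved there. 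The two computations are consistent -- for instance, for $k=1$ one checks $c_2^{\,2}=\widetilde y_0^{\,2}\widetilde y_1^{\,2}[\widetilde y_0,\widetilde y_1]$ -- so your argument is in effect an honest expansion of $c_{2^k}^{\,2}$ in the basis of $\Phi(M)$, replacing the paper's appeal to the lower central series by direct linear independence. Two further points in your favour: your upper bound ($\exp(W_k)$ divides $2^{k+1}$, hence $g^{2^{k+1}}\in Z_k$, and $Z_k$ is elementary abelian) is spelled out more completely than the paper's, and you avoid the binomial bookkeeping and the control of $K(x_k,y_k)$ altogether. What the paper's route buys is coherence with the commutator calculus used throughout Section~\ref{sec:3} and the explicit identification of the surviving element as $c_{2^k}^{\,2}$, which ties the statement to the structure of the lower central series; what yours buys is self-containedness, with a nontriviality certificate that is immediate rather than inherited from Proposition~\ref{proposition G_k lower central series}.
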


\begin{proof}
 First we show that $x_ky_k$ has order $2^{k+2}$. Consider $(x_ky_k)^{2^k}$ and observe that $K(x_k,y_k)\le [H_k,H_k]$
 in (\ref{equ:commutator-formula-1}) has exponent 2. Therefore, $(x_ky_k)^{2^k}\equiv_{K(x_k,y_k)}[y_k,x_k,\overset{2^{k-1}-1}\ldots,x_k]^2
 [y_k,x_k,\overset{2^k-1}\ldots,x_k]$ yields $(x_ky_k)^{2^{k+1}}=c_{2^k}^{\,2}$, which is non-trivial by the proof of
 Proposition~\ref{proposition G_k lower central series}. Hence the result.
 
 For a general element $g=x_k^{\,i\,}h$ for some $0\le i \le 2^k-1$ and $h\in H_k$, it similarly follows that $g^{2^{k+2}}=1$.
\end{proof}

The following two results will be needed for computing the normal Hausdorff spectra of~$G$ with respect to the series~$\mathcal{L}$ and~$\mathcal{D}$ in the next section.

\begin{proposition}
\label{proposition G_k p-central series}
For $k\in\N$, the length of the lower $2$-series of $G_k$ is $2^{k+1}-1$ and
\begin{align*}
 P_1(G_k)&=G_k,\\
 P_2(G_k)&=\langle x_k^{\,2},y_k^{\,2}\rangle \gamma_2(G_k),
\end{align*}
and
$$
P_i(G_k)=
\begin{cases}
\langle x_k^{\,2^{i-1}}, c_{i-1}^{\,2}\rangle\gamma_i(G_k) &\text{for }3\le i\le 2^{k-1}+1,\\
\langle x_k^{\,2^{i-1}}\rangle\gamma_i(G_k) &\text{for }2^{k-1}+2\le i\le 2^{k+1}.
\end{cases}
$$
\end{proposition}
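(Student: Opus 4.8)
The plan is to determine the terms $P_i(G_k)$ by induction on $i$, feeding the recursion $P_i(G_k)=P_{i-1}(G_k)^2\,[P_{i-1}(G_k),G_k]$ with the explicit lower central series of Proposition~\ref{proposition G_k lower central series}. I would first record the standard inclusion $\gamma_i(G_k)\le P_i(G_k)$, which follows by induction from $P_i(G_k)\supseteq[P_{i-1}(G_k),G_k]\supseteq[\gamma_{i-1}(G_k),G_k]=\gamma_i(G_k)$; hence it suffices to pin down each $P_i(G_k)$ modulo $\gamma_i(G_k)$. Two observations organise the whole argument. Since $x_k$ has order $2^k$, the power $x_k^{2^{i-1}}$ is trivial exactly when $i\ge k+1$, and as $2^{k-1}+2\ge k+1$ for all $k\in\N$ the element $x_k^{2^{i-1}}$ vanishes throughout $2^{k-1}+2\le i\le 2^{k+1}$, so there the asserted formula just reads $P_i(G_k)=\gamma_i(G_k)$. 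Dually, by Lemma~\ref{exp-2}(i) one has $c_{i-1}^{\,2}\in\gamma_i(G_k)$ precisely when $i\ge 2^{k-1}+2$, which is why $c_{i-1}^{\,2}$ survives as a generator for $3\le i\le 2^{k-1}+1$ but is absorbed into $\gamma_i(G_k)$ afterwards. Finally, once $P_i(G_k)=\gamma_i(G_k)$ is shown for $i\ge 2^{k-1}+2$, the length of the lower $2$-series equals the nilpotency class $2^{k+1}-1$ of Proposition~\ref{proposition G_k lower central series}, because $2^{k+1}-1\ge 2^{k-1}+2$.

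The inclusion $P_i(G_k)\supseteq\langle x_k^{2^{i-1}},c_{i-1}^{\,2}\rangle\gamma_i(G_k)$ is the easy half, needing only the squaring part of the recursion: assuming inductively that $x_k^{2^{i-2}}\in P_{i-1}(G_k)$ gives $x_k^{2^{i-1}}=(x_k^{2^{i-2}})^2\in P_{i-1}(G_k)^2\le P_i(G_k)$, while $c_{i-1}\in\gamma_{i-1}(G_k)\le P_{i-1}(G_k)$ gives $c_{i-1}^{\,2}\in P_{i-1}(G_k)^2\le P_i(G_k)$. Combined with $\gamma_i(G_k)\le P_i(G_k)$ this yields the containment. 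The base case $P_2(G_k)=G_k^{\,2}\gamma_2(G_k)=\langle x_k^{\,2},y_k^{\,2}\rangle\gamma_2(G_k)$ is immediate, and reading $c_1=y_k$ it already fits the uniform pattern $\langle x_k^{2^{i-1}},c_{i-1}^{\,2}\rangle\gamma_i(G_k)$ at $i=2$, so the induction for the first range can start there.

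The substance lies in the reverse inclusion, obtained by expanding $P_i(G_k)=P_{i-1}(G_k)^2\,[P_{i-1}(G_k),G_k]$ modulo $\gamma_i(G_k)$ from the inductive shape $P_{i-1}(G_k)=\langle x_k^{2^{i-2}},c_{i-2}^{\,2}\rangle\gamma_{i-1}(G_k)$, and checking each generator lands in $\langle x_k^{2^{i-1}},c_{i-1}^{\,2}\rangle\gamma_i(G_k)$. The commutator contributions are governed by~(\ref{equ:commutator-formula-2}) and Remark~\ref{remark derived subgroup}: one gets $[c_{i-2}^{\,2},x_k]\equiv c_{i-1}^{\,2}$, $[c_{i-2}^{\,2},y_k]\equiv 1$ and $[\gamma_{i-1}(G_k),G_k]=\gamma_i(G_k)$, all modulo $\gamma_i(G_k)$, using that $[H_k,H_k]\le Z_k$ has exponent $2$. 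The squaring contributions are controlled by~(\ref{equ:commutator-formula-1}): besides $(x_k^{2^{i-2}})^2=x_k^{2^{i-1}}$ and $(c_{i-2}^{\,2})^2=1$, one needs $\gamma_{i-1}(G_k)^2\subseteq\langle c_{i-1}^{\,2}\rangle\gamma_i(G_k)$, which follows from Proposition~\ref{proposition G_k lower central series} because in the first range the unique cyclic factor of order $4$ in $\gamma_{i-1}(G_k)/\gamma_i(G_k)$ is generated by $c_{i-1}$.

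The crux, and the step I expect to be the main obstacle, is the evaluation of $[x_k^{2^{i-2}},y_k]$ modulo $\gamma_i(G_k)$. Writing $[x_k,y_k,\overset{j-2}{\ldots},x_k]\equiv c_j^{-1}$ modulo higher terms, formula~(\ref{equ:commutator-formula-2}) gives $[x_k^{2^{i-2}},y_k]\equiv\prod_{j\ge 2}c_j^{-\binom{2^{i-2}}{j-1}}$ up to the corrections in $K(x_k,[x_k,y_k])$ and up to higher commutators, all of which I would argue lie in $\gamma_i(G_k)$. Since each $c_j$ has order dividing $4$ with $c_j^{\,2}\notin\gamma_{j+1}(G_k)$ for $j\le 2^{k-1}$, the decisive ingredient is a $2$-adic valuation analysis of the binomials: from $v_2\binom{2^{i-2}}{m}=(i-2)-v_2(m)$ one checks that $\binom{2^{i-2}}{j-1}\equiv 0\pmod{4}$ for all $2\le j\le i-2$ (so each such factor is trivial modulo $\gamma_i(G_k)$), that the factor for $j=i-1$ has even exponent and hence lies in $\langle c_{i-1}^{\,2}\rangle$, and that the factors with $j\ge i$ already lie in $\gamma_i(G_k)$. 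Ruling out any stray $c_j^{\,2}$ with $j<i-1$ reduces to the inequality $i-3\le 2^{i-4}$, valid for all $i\ge 4$. Executing this, together with the transition at $i=2^{k-1}+2$ where $c_{i-1}^{\,2}$ is absorbed by Lemma~\ref{exp-2}(i) and $x_k^{2^{i-1}}$ becomes trivial, completes the induction and the computation of the length.
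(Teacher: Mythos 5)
Your proposal is correct and takes essentially the same route as the paper: induction on $i$ via the recursion $P_i(G_k)=P_{i-1}(G_k)^2\,[P_{i-1}(G_k),G_k]$, evaluated modulo $\gamma_i(G_k)$ using the lower central series structure of Proposition~\ref{proposition G_k lower central series}, the commutator formula~(\ref{equ:commutator-formula-2}), and the parity of the relevant binomial coefficients, with the transition at $i=2^{k-1}+2$ handled via Lemma~\ref{exp-2}. Your explicit $2$-adic valuation analysis of $[x_k^{\,2^{i-2}},y_k]$ merely spells out what the paper compresses into its one-line appeal to Proposition~\ref{proposition G_k lower central series} and (\ref{equ:commutator-formula-2}), and your remark that $x_k^{\,2^{i-1}}=1$ throughout the second range (so that $P_i(G_k)=\gamma_i(G_k)$ there) is a correct reading of the stated formula.
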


\begin{proof}
If $i=1$ or $2$, the results are obvious, so consider $i=3$. As 
$$
[\langle x_k^{\,2},y_k^{\,2}\rangle,G_k]\le\gamma_2(G_k)^2\gamma_3(G_k),
$$
it suffices to show that $\langle x_k^{\,2},y_k^{\,2}\rangle^2\le 
\langle x_k^{\,4}\rangle\gamma_3(G_k)$ and  $\gamma_2(G_k)^2\le \langle c_2^{\,2}\rangle\gamma_3(G_k)$.
Note that
\[
[y_k^2,x_k^2]\equiv [y_k,x_k]^4=1\pmod {\gamma_3(G_k)},
\]
and since $x_k^4,y_k^4\in\langle x_k^4\rangle\gamma_3(G_k)$, the first inclusion holds.
For the second statement, it follows from Proposition~\ref{proposition G_k lower central series}, as $[y_k,x_k]$ is the only generator of $\gamma_2(G_k)$ modulo~$\gamma_3(G_k)$.

Now let $4\le i\le 2^{k-1}+1$ and assume by induction that
$$
P_{i-1}(G_k)=\langle x_k^{\,2^{i-2}}, c_{i-2}^{\,2}\rangle\gamma_{i-1}(G_k).
$$
On the one hand,
$$
[P_{i-1}(G_k),G_k]=[\langle x_k^{\,2^{i-2}}, c_{i-2}^{\,2}\rangle\gamma_{i-1}(G_k),G_k]=[\langle x_k^{\,2^{i-2}}, c_{i-2}^{\,2}\rangle,G_k]\gamma_{i}(G_k)
$$
and Proposition~\ref{proposition G_k lower central series} and (\ref{equ:commutator-formula-2}) yield 
\[
[\langle x_k^{\,2^{i-2}}, c_{i-2}^{\,2}\rangle,G_k]\gamma_{i}(G_k)=[\langle  c_{i-2}^{\,2}\rangle,G_k]\gamma_{i}(G_k).
\]
Then by similar arguments as above, one deduces that 
\[
[c_{i-2}^{\,2},G_k]\gamma_i(G_k)= \langle 
c_{i-1}^{\,2}\rangle \gamma_i(G_k).
\]

On the other hand,
$$
P_{i-1}(G_k)^2\equiv \langle x_k^{\,2^{i-2}}\rangle^2\gamma_{i-1}(G_k)^2\equiv \langle x_k^{\,2^{i-1}},  c_{i-1}^{\,2}\rangle\pmod{\gamma_i(G_k)},
$$
so we conclude that
$$
P_i(G_k)=\langle x_k^{\,2^{i-1}}, c_{i-1}^{\,2}\rangle\gamma_i(G_k),
$$
as asserted. The case $2^{k-1}+2\le i\le 2^{k+1}$ follows similarly, using Lemma~\ref{exp-2}(ii).
\end{proof}

\begin{proposition}
\label{proposition G_k dimension subgroup series}
For $k\in\N$, the length of the dimension subgroup series of $G_k$ is $2^{k+1}$ and
$$
D_i(G_k)=\langle x_k^{\,2^{l(i)}}\rangle\gamma_{\lceil \nicefrac{i}{2}\rceil}(G_k)^2\gamma_i(G_k) \quad \text{for } 1\le i\le 2^{k+1},
$$
where $l(i)=\lceil\log_2 i\rceil$.
\end{proposition}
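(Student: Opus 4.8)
\emph{Proof strategy.} The plan is to induct on $i$ using the defining recursion
\[
D_i(G_k)=D_{\lceil i/2\rceil}(G_k)^2\prod_{1\le j<i}[D_j(G_k),D_{i-j}(G_k)].
\]
The cases $i=1,2$ are immediate (for $i=2$ one gets $D_2(G_k)=G_k^{\,2}\gamma_2(G_k)=\Phi(G_k)$, which agrees with the formula since $l(2)=1$ and $\lceil 2/2\rceil=1$). For the inductive step I would first record the easy inclusion ``$\supseteq$''. The factor $\gamma_i(G_k)$ arises from $[D_1(G_k),D_{i-1}(G_k)]=[G_k,D_{i-1}(G_k)]\supseteq[G_k,\gamma_{i-1}(G_k)]=\gamma_i(G_k)$, while $x_k^{\,2^{l(i)}}=(x_k^{\,2^{l(i)-1}})^2$ and $\gamma_{\lceil i/2\rceil}(G_k)^2$ both lie in $D_{\lceil i/2\rceil}(G_k)^2$, using the arithmetic identity $l(\lceil i/2\rceil)=l(i)-1$ and $\gamma_{\lceil i/2\rceil}(G_k)\le D_{\lceil i/2\rceil}(G_k)$ from the inductive hypothesis.

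The substance is the reverse inclusion, obtained by bounding each factor of the recursion (the claimed right-hand side is a subgroup, being $\langle x_k^{\,2^{l(i)}}\rangle$ normalising $\gamma_{\lceil i/2\rceil}(G_k)^2\gamma_i(G_k)$). For $D_{\lceil i/2\rceil}(G_k)^2$ I would expand squares of generators via~(\ref{equ:commutator-formula-1}): $x_k^{\,2^{l(i)-1}}$ squares to $x_k^{\,2^{l(i)}}$, a generator $w\in\gamma_{\lceil i/2\rceil}(G_k)$ contributes $w^2\in\gamma_{\lceil i/2\rceil}(G_k)^2$ together with a correction $[w,x_k^{\,2^{l(i)-1}}]$, and the $\gamma_{\lceil i/4\rceil}(G_k)^2$-part lies in $Z_k$, so it squares trivially ($Z_k$ has exponent $2$) and, by $[H_k,Z_k]=1$, commutes with the $\gamma$-generators. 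Expanding the correction by~(\ref{equ:commutator-formula-2}) yields $[w,x_k]^{2^{l(i)-1}}$ and terms $[w,x_k,\overset{j}{\ldots},x_k]^{\binom{2^{l(i)-1}}{j}}$; since $H_k$ has exponent $4$ (Remark~\ref{remark derived subgroup}), the leading term is a square in $\gamma_{\lceil i/2\rceil}(G_k)^2$, and for $0<j<2^{l(i)-1}$ the binomial $\binom{2^{l(i)-1}}{j}$ is even (Lucas), so that term is again a square in $\gamma_{\lceil i/2\rceil}(G_k)^2$, while $j\ge\lfloor i/2\rfloor$ forces the commutator into $\gamma_i(G_k)$. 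Hence $D_{\lceil i/2\rceil}(G_k)^2$ is contained in the right-hand side.

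For the commutator factors $[D_j(G_k),D_{i-j}(G_k)]$ (one may assume $j\le\lfloor i/2\rfloor$ by antisymmetry, whence $i-j\ge\lceil i/2\rceil$) I would treat the three pieces of each $D_\ast(G_k)$ in turn. The pairings $[\gamma_j(G_k),\gamma_{i-j}(G_k)]\subseteq\gamma_i(G_k)$ are harmless. Commuting a power $x_k^{\,2^{l(j)}}$ against $\gamma_{i-j}(G_k)$ produces, via~(\ref{equ:commutator-formula-2}), powers $[\,\cdot\,,x_k]^{2^{l(j)}}$ of elements of $H_k$: for $j=1$ these land in $\gamma_i(G_k)$, while for $j\ge 2$ they are even powers, hence squares lying in $\gamma_{i-j+1}(G_k)^2\subseteq\gamma_{\lceil i/2\rceil}(G_k)^2$ (and vanish outright once $l(j)\ge 2$, as $H_k$ has exponent $4$); the higher commutator terms are handled identically by parity of binomials. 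Finally the $Z_k$-valued parts $\gamma_{\lceil j/2\rceil}(G_k)^2$ commute trivially with $\gamma_{i-j}(G_k)$ by $[H_k,Z_k]=1$, and against $x_k^{\,2^{l(i-j)}}$ they give squares $c_{m+1}^{\,2}$ that vanish (exponent $2$ in $Z_k$) unless $i-j=1$; the remaining case $j=i-1$ is exactly where the bookkeeping $\lceil(i-1)/2\rceil+1=\lceil i/2\rceil$ places the output $[\gamma_{\lceil(i-1)/2\rceil}(G_k)^2,x_k]$ into $\gamma_{\lceil i/2\rceil}(G_k)^2$. I expect this uniform index-chasing of the cross-terms to be the main obstacle: one must verify that every power of $x_k$ buried inside a $D_\ast(G_k)$ is large enough that, on commuting, it either annihilates the order-$4$ elements of $H_k$ or produces squares of weight at least $\lceil i/2\rceil$, so that nothing escapes $\langle x_k^{\,2^{l(i)}}\rangle\gamma_{\lceil i/2\rceil}(G_k)^2\gamma_i(G_k)$.

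Once the formula is proved, the length is read off as follows. By Proposition~\ref{proposition G_k lower central series} we have $\gamma_{2^{k+1}}(G_k)=1$, whereas $c_{2^k}^{\,2}\ne 1$ gives $\gamma_{2^k}(G_k)^2\ne 1$; thus $D_{2^{k+1}}(G_k)=\langle x_k^{\,2^{k+1}}\rangle\gamma_{2^k}(G_k)^2\ne 1$. On the other hand $D_{2^{k+1}+1}(G_k)=\langle x_k^{\,2^{k+2}}\rangle\gamma_{2^k+1}(G_k)^2$, and this is trivial because $x_k$ has order dividing the exponent $2^{k+2}$ of $G_k$ and $\gamma_{2^k+1}(G_k)\le Z_k$ has exponent $2$. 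Hence the dimension subgroup series has length $2^{k+1}$.
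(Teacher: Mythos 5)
You take a genuinely different route from the paper, and it is the route with the trap in it. The paper's proof is short: by \cite[Thm.~11.2]{DDMS} one has $D_i(G_k)=\prod_{n\cdot 2^m\ge i}\gamma_n(G_k)^{2^m}$, which collapses to $G_k^{\,2^{l(i)}}\gamma_{\lceil i/2\rceil}(G_k)^2\gamma_i(G_k)$ because $\exp\gamma_2(G_k)=4$, and then a single application of~(\ref{equ:commutator-formula-1}) replaces $G_k^{\,2^{l(i)}}$ by $\langle x_k^{\,2^{l(i)}}\rangle$ modulo the other two factors. Your plan instead re-derives this from the recursive definition of $D_i$, which forces you to control every cross-term by hand, and that is exactly where your sketch breaks.

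The step that fails is your claim that the $Z_k$-valued parts, commuted against powers of $x_k$, ``give squares $c_{m+1}^{\,2}$ that vanish (exponent $2$ in $Z_k$)''. This is false. For $u\in Z_k$ the conjugation action factors through $G_k/H_k$ (since $[H_k,Z_k]=1$), so over $\F_2$ one has exactly $[u,x_k^{\,2^s}]=[u,x_k,\overset{2^s}{\ldots},x_k]$: in the expansion~(\ref{equ:commutator-formula-2}) all intermediate terms die (even binomial coefficients, exponent $2$), but the last term has coefficient $1$ and survives. Such commutators are typically nontrivial; for instance $[c_m^{\,2},x_k]=c_{m+1}^{\,2}z_{m+1,m}$, and Corollary~\ref{corollary p^k}, which asserts $[z_{i,j},x^{2^k}]=z_{i+2^k,j}\,z_{i,j+2^k}\,z_{i+2^k,j+2^k}$, exists precisely because commutators of $Z$-elements with powers of $x$ do \emph{not} vanish (the proof of Theorem~\ref{F} depends on this). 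The same unhandled term hides in your treatment of $D_{\lceil i/2\rceil}(G_k)^2$: that subgroup is generated by squares of \emph{arbitrary} elements $x_k^{\,a}uw$, not just of generators, and squaring produces the cross-term $[u,x_k^{\,a}]$ with $u\in\gamma_{\lceil i/4\rceil}(G_k)^2$, which your sketch never mentions. The containments you want are still true, but for a weight reason rather than an exponent reason: for $h\in\gamma_m(G_k)$ one shows $[h^2,x_k,\overset{t}{\ldots},x_k]\equiv[h,x_k,\overset{t}{\ldots},x_k]^2\pmod{\gamma_{2m+t}(G_k)}$, and then $2^{l(i-j)}\ge i-j$ (respectively $2^{l(i)-1}\ge i/2$) places the square inside $\gamma_{\lceil i/2\rceil}(G_k)^2$ and the correction inside $\gamma_i(G_k)$. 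You would also need to patch the cases $i=3,4$ and $j\le 2$, where $\gamma_{\lceil i/4\rceil}(G_k)^2=G_k^{\,2}$ (respectively $\gamma_{\lceil j/2\rceil}(G_k)^2=G_k^{\,2}$) is not contained in $Z_k$, so your ``lies in $Z_k$'' reductions do not apply. Given how much repair this needs, the efficient fix is to do what the paper does and quote \cite[Thm.~11.2]{DDMS}.
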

\begin{proof}
By~\cite[Thm.~11.2]{DDMS}, we have
$$
D_i(G_k)=\prod_{n\cdot 2^m\ge i}\gamma_n(G_k)^{2^m}
$$
for every $i\in \mathbb{N}$, and since $\exp(\gamma_2(G_k))=4$, we obtain
$$
D_i(G_k)=G_k^{\,2^{l(i)}}\gamma_{\lceil \nicefrac{i}{2}\rceil}(G_k)^2\gamma_i(G_k).
$$
The result is clear for $i=1,2$, so we assume $i\ge 3$. By~(\ref{equ:commutator-formula-1}), for every $a,b\in G_k$ it follows that
$$
(ab)^{2^{l(i)}}=  a^{2^{l(i)}}b^{2^{l(i)}}[b,a,
 \overset{2^{l(i)-1}-1}{\ldots},a]^{\binom{2^{l(i)}}{2^{l(i)-1}}} c
$$
with $c\in\gamma_{2^{l(i)}}(G_k)$. Since $[b,a,
 \overset{2^{l(i)-1}-1}{\ldots},a]^{\binom{2^{l(i)}}{2^{l(i)-1}}} \in \gamma_{\lceil \nicefrac{i}{2}\rceil}(G_k)^2$ and $\gamma_{2^{l(i)}}(G_k)\le\gamma_i(G_k)$, we get
$$
D_i(G_k)=\langle x_k^{\,2^{l(i)}}\rangle\gamma_{\lceil \nicefrac{i}{2}\rceil}(G_k)^2\gamma_i(G_k),
$$
as required.
\end{proof}


\section{The normal Hausdorff spectra of \texorpdfstring{$G$}{G}}\label{sec:normal-spectra}

In this section we compute the normal Hausdorff spectra of~$G$ with respect to the filtration series $\mathcal{M}$, $\mathcal{L}$, $\mathcal{D}$, $\mathcal{P}$, and $\mathcal{F}$. Here $\mathcal{M}:M_0\ge M_1\ge\cdots$ stands for the natural filtration series of~$G$ where each $M_i$ is the subgroup of~$G$ corresponding to~$N_i/N$, where here $N_0=F_2$.

 As an easy illustration of our methods we start computing the normal Hausdorff spectrum of~$G$ with respect to~$\mathcal{M}$.

\begin{theorem}\label{M}
The pro-$2$ group $G$ satisfies
$$
\hspec_{\trianglelefteq}^{\mathcal{M}}(G)=[0,1]
$$
and $Z$ has strong Hausdorff dimension~$1$ in~$G$ with respect to~$\mathcal{M}$.
\end{theorem}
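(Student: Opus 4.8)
The plan is to derive both assertions from Proposition~\ref{proposition path/area}, applied to $\Gamma=G$, to the normal subgroup $Z$, and to the series $\mathcal{S}=\mathcal{M}$. First I would verify the standing hypotheses. The subgroup $Z$ is infinite, normal, and abelian; in fact it is elementary abelian by Remark~\ref{remark derived subgroup}, so the exponent $2^{e_i}$ of each quotient $Z/(M_i\cap Z)$ satisfies $e_i\le 1$. Moreover $[H,Z]=1$ forces $H\le C_G(Z)$, whence $G/C_G(Z)$ is a quotient of $G/H\cong\langle x\rangle\cong\Z_2$ and is therefore procyclic. The bounded exponents $e_i\le 1$, a direct consequence of $Z$ being elementary abelian, are precisely what will make the numerical hypothesis of Proposition~\ref{proposition path/area} cheap to meet.

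Since $M_i$ is by construction the kernel of the projection $G\twoheadrightarrow G_i$, we have $G/M_i\cong G_i$ and $ZM_i/M_i\cong Z_i$, so the orders computed in Section~\ref{sec:3} transfer directly. I would next establish the strong Hausdorff dimension of $Z$: Proposition~\ref{proposition order of Gk} gives $\log_2|G:M_i|=\log_2|G_i|=i+2^{2i-1}+2^{i+1}-2^{i-1}$, while its proof gives $\log_2|ZM_i:M_i|=\log_2|Z_i|=2^i+\binom{2^i}{2}=2^{2i-1}+2^{i-1}$. Hence
\[
\frac{\log_2|ZM_i:M_i|}{\log_2|G:M_i|}=\frac{2^{2i-1}+2^{i-1}}{2^{2i-1}+2^{i+1}-2^{i-1}+i}\longrightarrow 1\qquad(i\to\infty),
\]
and because this is a genuine limit, $Z$ has strong Hausdorff dimension $\xi=1$ with respect to $\mathcal{M}$, which is the second assertion.

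To invoke Proposition~\ref{proposition path/area} it remains to supply the data $n_i$ and $K_i$. By Proposition~\ref{proposition G_k lower central series} the group $G_i$ has nilpotency class $2^{i+1}-1$, so $\gamma_{2^{i+1}}(G)$ lies in the kernel $M_i$; taking $n_i=2^{i+1}-1$ therefore gives $\gamma_{n_i+1}(G)\cap Z\le M_i\cap Z$. Choosing $K_i=M_i$ makes $M_i\cap Z\le K_i$ automatic and yields $\log_2|Z:K_i\cap Z|=\log_2|Z_i|=2^{2i-1}+2^{i-1}$. With $e_i\le 1$ we then obtain
\[
\frac{e_i\,n_i}{\log_2|Z:K_i\cap Z|}\le\frac{2^{i+1}-1}{2^{2i-1}+2^{i-1}}\longrightarrow 0\qquad(i\to\infty),
\]
so its lower limit is $0$. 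Proposition~\ref{proposition path/area} then yields $[0,1]=[0,\xi]\subseteq\hspec_{\trianglelefteq}^{\mathcal{M}}(G)$, and since Hausdorff dimension always lies in $[0,1]$ the reverse inclusion is immediate, giving $\hspec_{\trianglelefteq}^{\mathcal{M}}(G)=[0,1]$.

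I do not anticipate a genuine obstacle: the design of $G$ makes $Z$ elementary abelian (so $e_i\le 1$) and makes the class $n_i\approx 2^{i}$ of $G_i$ grow only like the square root of $\log_2|Z_i|\approx 2^{2i-1}$, and this gap is exactly what drives the ratio $e_i n_i/\log_2|Z:K_i\cap Z|$ to zero. The only points that need care are the bookkeeping identity $ZM_i/M_i\cong Z_i$, so that the order counts of Section~\ref{sec:3} apply verbatim, and reading off the nilpotency class of $G_i$ from Proposition~\ref{proposition G_k lower central series}; both are already available.
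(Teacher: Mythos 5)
Your proof is correct and takes essentially the same route as the paper's: both establish that $Z$ has strong Hausdorff dimension $1$ with respect to $\mathcal{M}$ from the order computations in Proposition~\ref{proposition order of Gk}, and both then invoke Proposition~\ref{proposition path/area} with $n_k=2^{k+1}-1$ (via $\gamma_{2^{k+1}}(G)\le M_k$, from the nilpotency class in Proposition~\ref{proposition G_k lower central series}) and $K_k=M_k$. The only difference is that you make explicit the hypothesis checks the paper leaves implicit, namely that $G/C_G(Z)$ is procyclic, that $ZM_i/M_i\cong Z_i$, and that $e_i\le 1$ since $Z$ is elementary abelian.
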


\begin{proof}
By Proposition~\ref{proposition G_k lower central series} we know that $\gamma_{2^{k+1}}(G)\le M_k$ for all $k\in\N$.
On the other hand, by the proof of Proposition~\ref{proposition order of Gk} we have
$$
\log_2|Z:M_k\cap Z|=\log_2|ZM_k:M_k|=2^k+\binom{2^k}{2}.
$$
In particular
$$
\varliminf_{k\rightarrow\infty}\dfrac{2^{k+1}}{\log_2|Z:M_k\cap Z|}=0.
$$
Moreover,
\begin{align*}
\hdim_{G}^{\mathcal{M}}(Z)=\varliminf_{k\rightarrow\infty}\dfrac{\log_2|ZM_k:M_k|}{\log_2|G:M_k|}
=\varliminf_{k\rightarrow\infty}\dfrac{2^k+\binom{2^k}{2}}{k+2^{k+1}+\binom{2^k}{2}}
=1.
\end{align*}
Therefore, the second statement follows and, by Proposition~\ref{proposition path/area}, we conclude that
\[
\hspec_{\trianglelefteq}^{\mathcal{M}}(G)=[0,1]. \qedhere
\]
\end{proof}

\begin{theorem}\label{LD}
For $\mathcal{S}\in\{\mathcal{L},\mathcal{D}\}$, the pro-$2$ group $G$ satisfies
$$
\hspec_{\trianglelefteq}^{\mathcal{S}}(G)=[0,1]
$$
and $Z$ has strong Hausdorff dimension~$1$ in~$G$ with respect to~$\mathcal{S}$.
\end{theorem}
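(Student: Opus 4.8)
The plan is to mirror the strategy used for Theorem~\ref{M}, applying Proposition~\ref{proposition path/area} with $Z$ as the infinite abelian normal subgroup. The subgroup $Z$ is abelian, and since $G/C_G(Z)$ is a quotient of the procyclic group $\langle x\rangle\cong\Z_2$ (as $H=\langle c_n\mid n\in\N\rangle$ centralises $Z$ by $[H,Z]=1$), the procyclicity hypothesis is satisfied. Thus it suffices to verify, for each $\mathcal{S}\in\{\mathcal{L},\mathcal{D}\}$, the three ingredients: that $Z$ has \emph{strong} Hausdorff dimension $1$ with respect to $\mathcal{S}$; that a suitable power of $\gamma$ sits inside $S_i\cap Z$; and that the quotient $e_i n_i/\log_2|Z:K_i\cap Z|$ tends to $0$ along a subsequence. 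Once strong dimension $1$ is established, the full spectrum $[0,1]=[0,\xi]$ follows immediately from Proposition~\ref{proposition path/area}.

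First I would compute $\hdim_G^{\mathcal{S}}(Z)$ using the explicit descriptions of $P_i(G_k)$ and $D_i(G_k)$ from Propositions~\ref{proposition G_k p-central series} and~\ref{proposition G_k dimension subgroup series}, passing to the inverse limit to obtain the corresponding series $P_i(G)$ and $D_i(G)$. The key quantities are $\log_2|ZP_i(G):P_i(G)|=\log_2|Z:P_i(G)\cap Z|$ and the analogue for $\mathcal{D}$, together with $\log_2|G:P_i(G)|$ and $\log_2|G:D_i(G)|$. Since the generators of $Z$ visible in $P_i(G)$ and $D_i(G)$ are controlled by the squares $c_n^{\,2}$ and the commutators $z_{i,j}$, and since these already dominate the order of $G$ (as in the $\binom{2^k}{2}$ term computed in Proposition~\ref{proposition order of Gk}), I expect the ratio to converge to $1$ as a genuine limit, giving strong Hausdorff dimension. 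Here Remark~\ref{remark index}, which records $\log_2|Z:(\gamma_i(G)\cap Z)|$ exactly, will be the main computational tool for pinning down the numerators.

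For the containment and vanishing-ratio conditions, I would exploit the fact that both $\mathcal{L}$ and $\mathcal{D}$ are sandwiched between terms of the lower central series: from Propositions~\ref{proposition G_k p-central series} and~\ref{proposition G_k dimension subgroup series} one has bounds of the shape $\gamma_i(G)\le P_i(G)$ and $\gamma_i(G)\le D_i(G)\le\gamma_{\lceil i/2\rceil}(G)$ (up to the extra cyclic top-group factors $\langle x^{2^{\cdots}}\rangle$, which meet $Z$ trivially). Intersecting with $Z$, the top-group factors disappear, so $S_i\cap Z$ is squeezed between consecutive $\gamma_n(G)\cap Z$. Choosing $n_i$ of order roughly the index $i$ (linear for $\mathcal{L}$, and comparable for $\mathcal{D}$) and $K_i$ to be $S_i$ itself, the exponent $2^{e_i}$ of $Z/(S_i\cap Z)$ is bounded (by Remark~\ref{remark derived subgroup}, $Z$ has exponent $2$, forcing $e_i\le 1$), so the numerator $e_i n_i$ grows only linearly in $i$, while the denominator $\log_2|Z:K_i\cap Z|$ grows quadratically by Remark~\ref{remark index}. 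Hence the lower limit of the ratio is $0$, as required.

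The main obstacle I anticipate is the bookkeeping for $\mathcal{D}$: because $D_i(G)=\langle x^{2^{l(i)}}\rangle\gamma_{\lceil i/2\rceil}(G)^2\gamma_i(G)$ involves the squared term $\gamma_{\lceil i/2\rceil}(G)^2$, one must check carefully which elements of $Z$ lie in $D_i(G)\cap Z$ — specifically, disentangling the contribution of $\gamma_{\lceil i/2\rceil}(G)^2\cap Z$ (the squares $c_n^{\,2}$ of weight about $i/2$) from that of $\gamma_i(G)\cap Z$ (the commutators $z_{i,j}$ of weight about $i$). Getting the exponent $e_i$ and the index $\log_2|Z:D_i(G)\cap Z|$ exactly right, so that the strong-dimension limit really equals $1$ rather than some smaller value, is the delicate point; I would handle it by comparing with the lower-central-series indices in Remark~\ref{remark index} and verifying that the squared terms only lower the weight threshold for the $c_n^{\,2}$ without affecting the dominant quadratic count of the $z_{i,j}$. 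For $\mathcal{L}$ the analysis is cleaner since $P_i(G)\cap Z$ is determined directly by the generators $x^{2^{i-1}}$, $c_{i-1}^{\,2}$ and $\gamma_i(G)$.
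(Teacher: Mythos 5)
Your proposal follows essentially the same route as the paper's proof: apply Proposition~\ref{proposition path/area} with $Z$ (exponent $2$, so $e_i\le 1$), use $\gamma_i(G)\le P_i(G)$ and $\gamma_i(G)\le D_i(G)$ for the containment with $n_i$ linear in $i$, get the quadratic growth of $\log_2\lvert Z:S_i\cap Z\rvert$ from Remark~\ref{remark index} together with Propositions~\ref{proposition G_k p-central series} and~\ref{proposition G_k dimension subgroup series} (including the correct observation that for $\mathcal{D}$ the term $\gamma_{\lceil i/2\rceil}(G)^2$ only contributes linearly many extra generators $c_n^{\,2}$ modulo $\gamma_i(G)\cap Z$), and deduce strong Hausdorff dimension $1$ from the decomposition $\log_2\lvert G:S_k\rvert=\log_2\lvert G:S_kZ\rvert+\log_2\lvert Z:S_k\cap Z\rvert$ with linear numerator versus quadratic denominator. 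This matches the paper's argument (which cites \cite[Prop.~2.6]{KT} for $\log_2\lvert G:S_kZ\rvert=2k$ rather than recomputing it), so the plan is correct.
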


\begin{proof}
By Remark~\ref{remark index} we have 
$$
\varliminf_{k\rightarrow\infty}\dfrac{k}{\log_2|Z:P_k(G)\cap Z|}=0 \quad\text{and}\quad 
\varliminf_{k\rightarrow\infty}\dfrac{k}{\log_2|Z:D_k(G)\cap Z|}=0,
$$
and they are furthermore given by proper limits.
Then it suffices by Proposition~\ref{proposition path/area} to show that $Z$ has strong Hausdorff dimension~$1$ with respect to~$\mathcal{S}$.
Write  $G=S_0\ge S_1\ge S_2\ge\cdots$ for the subgroups of the filtration series $\mathcal{S}$. Observe that by~\cite[Prop.~2.6]{KT} we have
$$
\log_2|G:S_kZ|=2k,
$$
and so
\begin{align*}
    \lim_{k\rightarrow\infty}\dfrac{\log_2|G:S_kZ|}{\log_2|Z:S_k\cap Z|}
        =0,
\end{align*}
is given by a proper limit. 
Thus,
\begin{align*}
\hdim_{G}^{\mathcal{S}}(Z)&=\varliminf_{k\rightarrow\infty}\left(\dfrac{\log_2|G:S_k|}{\log_2|S_kZ:S_k|}\right)^{-1}\\
&=\varliminf_{k\rightarrow\infty}\left(\dfrac{\log_2|G:S_kZ|+\log_2|S_kZ:S_k|}{\log_2|S_kZ:S_k|}\right)^{-1}\\
&=\varliminf_{k\rightarrow\infty}\left(\dfrac{\log_2|G:S_kZ|}{\log_2|Z:S_k\cap Z|}+1\right)^{-1}
=1,
\end{align*}
and $Z$ has strong Hausdorff dimension 1, as we wanted.
Thus, the proof is complete.
\end{proof}

For all $n\in \mathbb{N}$ define $\Gamma_{n}=\langle x^{2^n}\rangle Q_{n-1}\gamma_{2^n}(G)\trianglelefteq G$ where
$$Q_{n-1}=\langle c_{i}^{\,2} \mid i\ge 2^{n-1}\rangle.
$$

\begin{lemma}
\label{lemma Gamma}
For each $n\in \mathbb{N}$, we have $\Gamma_n^{\,2}\le \Gamma_{n+1}$.
\end{lemma}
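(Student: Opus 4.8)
The plan is to prove that the quotient $\Gamma_n/\Gamma_{n+1}$ is elementary abelian; since $\Gamma_{n+1}\trianglelefteq G$, this yields $\Gamma_n^{\,2}\le\Gamma_{n+1}$ at once, because then the image in $\Gamma_n/\Gamma_{n+1}$ of every element of $\Gamma_n$ squares to the identity. First I would record that the chain is descending, $\Gamma_{n+1}\le\Gamma_n$, so that the quotient is meaningful: indeed $x^{2^{n+1}}=(x^{2^n})^2\in\langle x^{2^n}\rangle$, $Q_n\le Q_{n-1}$, and $\gamma_{2^{n+1}}(G)\le\gamma_{2^n}(G)$. To see that $\Gamma_n/\Gamma_{n+1}$ is elementary abelian I would verify, on the generating set $\{x^{2^n}\}\cup\{c_i^{\,2}\mid i\ge 2^{n-1}\}\cup\mathcal{G}$ of $\Gamma_n$ (where $\mathcal{G}$ is the generating set of $\gamma_{2^n}(G)$ furnished by Proposition~\ref{proposition G_k lower central series}), the two conditions that (a) each generator squares into $\Gamma_{n+1}$, and (b) each commutator of two generators lies in $\Gamma_{n+1}$; an abelian group generated by involutions is elementary abelian.

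Condition (a) and the non-$x^{2^n}$ part of (b) are the routine half. One has $(x^{2^n})^2=x^{2^{n+1}}\in\Gamma_{n+1}$; each generator of $Q_{n-1}$ satisfies $(c_i^{\,2})^2=c_i^{\,4}=1$ since $H$ has exponent $4$ by Remark~\ref{remark derived subgroup}; and among the generators in $\mathcal{G}$, those $c_i$ with $i\ge 2^n$ satisfy $c_i^{\,2}\in Q_n\le\Gamma_{n+1}$, while the generators $c_{i,j}$ lie in the elementary abelian group $Z$ and so square to~$1$. For the commutators not involving $x^{2^n}$: $[Q_{n-1},Q_{n-1}]=1$ as $Q_{n-1}\le Z$; $[Q_{n-1},\gamma_{2^n}(G)]\le[Z,H]=1$ because $\gamma_{2^n}(G)\le H$ and $[H,Z]=1$; and $[\gamma_{2^n}(G),\gamma_{2^n}(G)]\le\gamma_{2^{n+1}}(G)\le\Gamma_{n+1}$.

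This leaves the two brackets with $x^{2^n}$, which form the technical heart and where I expect the main obstacle to lie. For $[x^{2^n},c_i^{\,2}]$ with $i\ge 2^{n-1}$ I would combine the conjugation action of $x$ on $c_i^{\,2}\in Z$ with Corollary~\ref{corollary p^k}: working in the $\mathbb{F}_2$-module $Z$ and exploiting that squaring is the Frobenius in characteristic $2$, an induction on $n$ should give the clean identity $[x^{2^n},c_i^{\,2}]=c_{i+2^n}^{\,2}\,z_{i+2^n,i}$. Here $c_{i+2^n}^{\,2}\in Q_n$, while $z_{i+2^n,i}$ has weight $2i+2^n\ge 2^{n+1}$ precisely because $i\ge 2^{n-1}$, so $z_{i+2^n,i}\in\gamma_{2^{n+1}}(G)$; hence $[x^{2^n},c_i^{\,2}]\in\Gamma_{n+1}$ — the threshold $i\ge 2^{n-1}$ in the definition of $Q_{n-1}$ is exactly calibrated for this. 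For $[x^{2^n},g]$ with $g\in\mathcal{G}$ I would expand using the commutator identity~(\ref{equ:commutator-formula-2}) with $a=x$ and $p^r=2^n$: since $\binom{2^n}{m}$ is even for $0<m<2^n$, every intermediate term is a square of a commutator of index $\ge 2^n$, hence lies in $Q_n$, while the single $m=2^n$ term has weight $\ge 2^{n+1}$ and so lies in $\gamma_{2^{n+1}}(G)$; for the $c_{i,j}$-generators one argues as for $c_i^{\,2}$, using Lemma~\ref{lemma double product} and Corollary~\ref{corollary p^k} to push the weight up by $2^n$ and conclude $[x^{2^n},c_{i,j}]\in\gamma_{2^{n+1}}(G)$. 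The genuine difficulty is exactly the bookkeeping in these two $x^{2^n}$-commutators — in particular verifying the inductive identity for $[x^{2^n},c_i^{\,2}]$, controlling the error term $K(x,[x,g])$ of~(\ref{equ:commutator-formula-2}), and confirming that every square produced has index $\ge 2^n$. Once this is done, all commutators of generators lie in $\Gamma_{n+1}$, so $\Gamma_n/\Gamma_{n+1}$ is elementary abelian and $\Gamma_n^{\,2}\le\Gamma_{n+1}$ follows.
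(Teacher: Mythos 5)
Your proposal is correct, and its skeleton is the same as the paper's: the paper likewise reduces the claim to checking commutators (and squares) of the generators, notes $[Q_{n-1},Q_{n-1}\gamma_{2^n}(G)]=1$ and $[\gamma_{2^n}(G),\gamma_{2^n}(G)]\le\gamma_{2^{n+1}}(G)\le\Gamma_{n+1}$ exactly as you do, and treats $[\gamma_{2^n}(G),x^{2^n}]$ via the collection formula~(\ref{equ:commutator-formula-2}). The genuine difference is at the technical heart, $[x^{2^n},c_i^{\,2}]$: the paper writes $[c_i^{\,2},x^{2^n}]=[c_i,x^{2^n}]^2[c_i,x^{2^n},c_i]$ and applies~(\ref{equ:commutator-formula-2}) to $[c_i,x^{2^n}]$, whereas you derive the exact identity $[c_i^{\,2},x^{2^n}]=c_{i+2^n}^{\,2}\,z_{i+2^n,i}$. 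That identity does hold, so your hedge can be removed: the base case is $[c_i^{\,2},x]=c_{i+1}^{\,2}z_{i+1,i}$ (valid because $z_{i+1,i}\in Z$ is central in $H$), and the inductive step follows from $\sigma^{2^{n+1}}-1=(\sigma^{2^n}-1)^2$ on the $\F_2$-module $Z$, Corollary~\ref{corollary p^k}, and $z_{a,a}=1$. Your route is longer but more transparent: it makes visible exactly why the threshold $i\ge 2^{n-1}$ in the definition of $Q_{n-1}$ is what forces $z_{i+2^n,i}\in\gamma_{2i+2^n}(G)\le\gamma_{2^{n+1}}(G)$, a calibration the paper's proof leaves implicit; what the paper's route buys is brevity, processing both families of commutators with the single formula~(\ref{equ:commutator-formula-2}) and no induction. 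One small imprecision on your side: the intermediate terms in your expansion of $[x^{2^n},c_j]$ are even powers of elements of $\gamma_M(G)$ with $M>2^n$, and such squares lie in $Q_n\gamma_{2^{n+1}}(G)$ (hence in $\Gamma_{n+1}$) only after decomposing each element as a product of $c_m$'s and central elements of $Z$ modulo higher terms of the lower central series --- they are not literally in $Q_n$; since the paper glosses the analogous inclusion $\gamma_{2^{n}+2^{n-1}}(G)^2\le\Gamma_{n+1}$ at the same level of detail, this is cosmetic rather than a gap.
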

\begin{proof}
We only have to check that $\Gamma_n'\le \Gamma_{n+1}$. Clearly $[Q_{n-1},Q_{n-1}\gamma_{2^n}(G)]=1$ and $[\gamma_{2^n}(G),\gamma_{2^n}(G)]\le\gamma_{2^{n+1}}(G)\le\Gamma_{n+1}$, so it suffices to prove that
$$
[\langle x^{2^n}\rangle,Q_{n-1}\gamma_{2^n}(G)]\le\Gamma_{n+1}.
$$

On the one hand, (\ref{equ:commutator-formula-2}) yields
$$
[\gamma_{2^n}(G),x^{2^n}]\le\gamma_{2^{n}+2^{n-1}}(G)^2\gamma_{2^{n+1}}(G)\le\Gamma_{n+1}.
$$

On the other hand, for $2^{n-1}\le i\le 2^{n}-1$, again by~(\ref{equ:commutator-formula-2}) we have
$$
[c_i,x^{2^n}]\in\gamma_{2^n}(G)^2\gamma_{2^n+2^{n-1}}(G),
$$
so
$$
[c_i^{\,2},x^{2^n}]=[c_i,x^{2^n}]^2[c_i,x^{2^n},c_i]\in\Gamma_{n+1},
$$
as required.
\end{proof}

\begin{theorem}\label{P}
The pro-$2$ group $G$ satisfies
$$
\hspec_{\trianglelefteq}^{\mathcal{P}}(G)=[0,1]
$$
and $Z$ has strong Hausdorff dimension $1$ in $G$ with respect to $\mathcal{P}$.
\end{theorem}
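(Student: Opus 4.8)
The plan is to follow the template of Theorems~\ref{M} and~\ref{LD}: I will prove that $Z$ has strong Hausdorff dimension~$1$ in~$G$ with respect to~$\mathcal{P}$ and then invoke Proposition~\ref{proposition path/area}. The standing hypotheses of that proposition hold here, since $[H,Z]=1$ gives $H\le C_G(Z)$, so that $G/C_G(Z)$ is a quotient of $G/H\cong\Z_2$ and hence procyclic; moreover $Z$ is elementary abelian, so the exponent $2^{e_i}$ of $Z/(G^{2^i}\cap Z)$ satisfies $e_i\le 1$. The one genuinely new ingredient, absent for the series $\mathcal{L}$ and $\mathcal{D}$, is an explicit description of the $2$-power series, namely the claim
\[
G^{2^n}=\Gamma_n=\langle x^{2^n}\rangle\, Q_{n-1}\,\gamma_{2^n}(G)\qquad\text{for all }n\in\N,
\]
with $\Gamma_n$ as defined before Lemma~\ref{lemma Gamma}.

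I would prove the inclusion $G^{2^n}\le\Gamma_n$ by induction on~$n$. The base case $G^2=\Gamma_1$ follows from~(\ref{equ:commutator-formula-1}) and~(\ref{equ:commutator-formula-2}) with $r=1$, which exhibit $x^2$, the squares $c_i^{\,2}$, and the whole of $\gamma_2(G)$ as products of $2$nd powers. For the inductive step one writes $g^{2^{n+1}}=(g^{2^n})^2$, so that $g^{2^{n+1}}\in\Gamma_n^{\,2}\le\Gamma_{n+1}$ by the induction hypothesis and Lemma~\ref{lemma Gamma}; this is precisely the purpose of that lemma. For the reverse inclusion $\Gamma_n\le G^{2^n}$ I would realise each generator as a product of $2^n$-th powers. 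Clearly $x^{2^n}\in G^{2^n}$. Expanding $(xy)^{2^n}$ by~(\ref{equ:commutator-formula-1}), and using $\exp H=4$ together with $v_2\binom{2^n}{j}=n-v_2(j)$ for $1\le j<2^n$ (so that $j=2^{n-1}$ is the unique index yielding a nontrivial square, while the error $K(x,y)$ lands in $\gamma_{2^n}(G)$ because $\exp Z=2$ and $\exp H=4$ annihilate its power part), one obtains
\[
(xy)^{2^n}\equiv x^{2^n}\,c_{2^{n-1}}^{\,2}\,c_{2^n}\pmod{\gamma_{2^n}(G)}.
\]
Hence the seed $c_{2^{n-1}}^{\,2}$ lies in $G^{2^n}\gamma_{2^n}(G)$, and conjugating by powers of~$x$ (using $\exp Z=2$) propagates it to all $c_i^{\,2}$ with $i\ge 2^{n-1}$, that is, $Q_{n-1}\le G^{2^n}\gamma_{2^n}(G)$. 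To reach $\gamma_{2^n}(G)\le G^{2^n}$ one refines the expansion modulo $\gamma_{2^n+1}(G)$ and applies it to the family $(x^m y)^{2^n}$: by the description of the factors $\gamma_i(G)/\gamma_{i+1}(G)$ in Proposition~\ref{proposition G_k lower central series}, this expresses each generator $c_{2j,\,2^n-2j}$ and $c_{2^n}$ as a product of $2^n$-th powers and of the already-captured squares, and a bootstrap up the lower central series, using that $G^{2^n}$ is closed and $\bigcap_m\gamma_m(G)=1$, completes the inclusion.

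With $G^{2^n}=\Gamma_n$ established, the two indices are computed as in Theorem~\ref{LD}. Since $Q_{n-1}\le Z$, the image of $\Gamma_n$ in $G/Z\cong W$ equals $\langle\bar x^{2^n}\rangle\gamma_{2^n}(W)=W^{2^n}$, and using $\langle\bar x\rangle\cap B=1$ and $B/\gamma_{2^n}(W)\cong C_2^{\,2^n-1}$ one finds $\log_2\lvert G:ZG^{2^n}\rvert=\log_2\lvert W:W^{2^n}\rvert=n+2^n-1$. On the other hand, Remark~\ref{remark index} gives $\log_2\lvert Z:\gamma_{2^n}(G)\cap Z\rvert=4^{n-1}$, and since $Q_{n-1}$ enlarges $\gamma_{2^n}(G)\cap Z$ by at most $2^{n-1}$ further squares, the index $\log_2\lvert Z:\Gamma_n\cap Z\rvert$ is of order $4^n$. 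Hence
\[
\frac{\log_2\lvert G:ZG^{2^n}\rvert}{\log_2\lvert Z:G^{2^n}\cap Z\rvert}=\frac{n+2^n-1}{\log_2\lvert Z:\Gamma_n\cap Z\rvert}\longrightarrow 0\quad(n\to\infty),
\]
so $\hdim_G^{\mathcal{P}}(Z)=1$ as a proper limit. Taking $n_i=2^i-1$ and $K_i=G^{2^i}$, the inclusion $\gamma_{2^i}(G)\le G^{2^i}$ yields $\gamma_{n_i+1}(G)\cap Z\le S_i\cap Z\le K_i$, while $e_i\le 1$ and the previous estimate give $\varliminf_{i\to\infty} e_i n_i/\log_2\lvert Z:K_i\cap Z\rvert=0$. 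Proposition~\ref{proposition path/area} then gives $[0,1]\subseteq\hspec_{\trianglelefteq}^{\mathcal{P}}(G)$, and the reverse containment is automatic. The main obstacle is the reverse inclusion $\Gamma_n\le G^{2^n}$: realising all of $\gamma_{2^n}(G)$ and of $Q_{n-1}$ as products of $2^n$-th powers is exactly where the $2$-power series departs from the better-behaved series $\mathcal{L}$ and $\mathcal{D}$, and it relies essentially on the fine bookkeeping of binomial $2$-adic valuations in~(\ref{equ:commutator-formula-1}) together with the interplay between $\exp H=4$ and $\exp Z=2$.
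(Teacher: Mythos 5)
Your proposal follows the paper's template (strong Hausdorff dimension of $Z$ plus Proposition~\ref{proposition path/area}), and your forward inclusion $G^{2^n}\le\Gamma_n$ is fine, but the crux of your argument --- the equality $G^{2^n}=\Gamma_n$, i.e.\ the reverse inclusion $Q_{n-1}\gamma_{2^n}(G)\le G^{2^n}$ --- is false, not merely unproved. It fails already for $n=2$: the element $c_{2,2}=[y,x,y,x]\in\gamma_4(G)\le\Gamma_2$ is not a product of fourth powers. This can be checked in the finite quotient $G_2$, which suffices because $G^4$ is the full preimage of $G_2^{\,4}$ (as in the paper's proof: $N_2=N\langle a^4\rangle^{F_2}$ and $\langle a^4\rangle^{F_2}\le F_2^{\,4}$). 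In $G_2$ one has $x^4=1$, so every fourth power $(x^mh)^4$ with $h\in H_2$ lies in $H_2$; using that $H_2\cong E/[\Phi(E),E]\Phi(E)^2$ is relatively free on the generators $u_i=y^{x^i}$, $0\le i\le 3$, with central elementary abelian Frattini subgroup (proof of Proposition~\ref{proposition order of Gk}), these powers equal $(h^{x^2}h)^2$ or $1$ when $m$ is even, and $h^{x^{3m}}h^{x^{2m}}h^{x^m}h$ when $m$ is odd. A direct $\F_2$-linear computation inside $Z_2$ (of dimension $10$) then shows that the subgroup generated by all fourth powers meets $Z_2$ exactly in $\gamma_5(G_2)$, of dimension $6$, whereas $\gamma_4(G_2)\cap Z_2=\langle c_{2,2}\rangle\gamma_5(G_2)$ has dimension $7$ and $\Gamma_2\cap Z_2$ has dimension $8$; in particular $c_{2,2}\notin G_2^{\,4}$. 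The reason your sketch cannot close this gap is structural: the generators $c_{2j,2^n-2j}$ of $\gamma_{2^n}(G)$ modulo $\gamma_{2^n+1}(G)$ are commutators of weight two in $y$, which is precisely the content of the uncontrolled error term $K(x^m,y)$ in Lemma~\ref{lemma commutator identities}; expanding powers $(x^my)^{2^n}$ only ever produces $x^{2^nm}$ and the terms $c_j^{\binom{2^n}{j}}$ modulo $K(x^m,y)$, so no such expansion can isolate the $c_{i,j}$.

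The false equality then breaks your application of Proposition~\ref{proposition path/area}: you take $n_i=2^i-1$, which requires $\gamma_{2^i}(G)\cap Z\le G^{2^i}\cap Z$, and this fails ($c_{2,2}$ again). The paper never determines $G^{2^n}$ exactly. It uses only $G^{2^k}\le\Gamma_k$ --- the direction that makes $\log_2\lvert Z:G^{2^k}\cap Z\rvert$ large, which is all the dimension computation needs --- and obtains the lower containment from an ingredient your proposal is missing: since $N_k=N\langle a^{2^k}\rangle^{F_2}$ with $\langle a^{2^k}\rangle^{F_2}\le F_2^{\,2^k}$ gives $G/G^{2^k}\cong G_k/G_k^{\,2^k}$, and $G_k$ has nilpotency class $2^{k+1}-1$ by Proposition~\ref{proposition G_k lower central series}, it follows that $\gamma_{2^{k+1}}(G)\le G^{2^k}$. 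One then applies Proposition~\ref{proposition path/area} with $n_k=2^{k+1}-1$ and $K_k=\Gamma_k$, which still works because $\log_2\lvert Z:\Gamma_k\cap Z\rvert=2\binom{2^{k-1}}{2}$ grows like $4^k$. If you replace your claimed equality by these two one-sided statements, the rest of your computation (which is essentially the paper's) goes through.
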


\begin{proof}
An arbitrary element of $G$ can be written as $x^ih$ with $h\in H$ and $i\in \mathbb{Z}_2$, and by~(\ref{equ:commutator-formula-1}), it follows that $(x^ih)^{2^k}\in\Gamma_{k}$ for $k\in\mathbb{N}$.
Then $G^{2^k}\le\Gamma_{k}$ and, in particular, $G^{2^k}\cap Z\le \Gamma_{k}\cap Z$.
It is easy to see that
$$
\Gamma_{k}\cap Z=(\gamma_{2^{k-1}}(G)^2\gamma_{2^k}(G))\cap Z=\gamma_{2^{k-1}}(G)^2(\gamma_{2^k}(G)\cap Z),
$$
and since $[\gamma_{2^{k-1}}(G),\gamma_{2^{k-1}}(G)]\le\gamma_{2^k}(G)\cap Z$, it follows that
$$
\gamma_{2^{k-1}}(G)^2\le\langle c_{2^{k-1}}^{\,2},c_{2^{k-1}+1}^{\,2},\ldots,c_{2^k-1}^{\,2}\rangle(\gamma_{2^k}(G)\cap Z).
$$
Thus, by Remark \ref{remark index}, we have
$$
\log_2|Z:\Gamma_k\cap Z|
=\log_2|Z:\gamma_{2^{k-1}}(G)^2(\gamma_{2^k}(G)\cap Z)| =2\binom{2^{k-1}}{2}.
$$

On the other hand, from the construction of $G$ and $G_k$ it can be deduced easily that $G/G^{2^k}\cong G_k/G_k^{\,2^k}$. Indeed, $N_k=N\langle a^{2^k}\rangle^{F_2}$ and $\langle a^{2^k}\rangle^{F_2}\le F_2^{\,2^k}$.
Hence, by Proposition~\ref{proposition G_k lower central series}, we get $\gamma_{2^{k+1}}(G)\le G^{2^k}$.

Now,
$$
\lim_{k\rightarrow\infty}\dfrac{2^{k+1}}{\log_2|Z:\Gamma_{k}\cap Z|}=0,
$$
so again by Proposition~\ref{proposition path/area}
it suffices to check that $Z$ has strong Hausdorff dimension~$1$ with respect to~$\mathcal{P}$.
Note that
$$
\log_2|G:G^{2^k}Z|=\log_2|G_k:G_k^{\,2^k}Z_k|\le\log_2|W_k|= k+2^k,
$$
and so,
\begin{align*}
    \lim_{k\rightarrow\infty}\dfrac{\log_2|G:G^{2^k}Z|}{\log_2|Z:G^{2^k}\cap Z|}\le\lim_{k\rightarrow\infty}\dfrac{\log_2|G:G^{2^k}Z|}{\log_2|Z:\Gamma_{k}\cap Z|}=0
\end{align*}
is given by a proper limit.
Thus, the result follows as in the proof of Theorem~\ref{LD}.
\end{proof}

\begin{theorem}\label{F}
The pro-$2$ group $G$ satisfies
$$
\hspec_{\trianglelefteq}^{\mathcal{F}}(G)=[0,1]
$$
and $Z$ has strong Hausdorff dimension $1$ in $G$ with respect to $\mathcal{F}$.
\end{theorem}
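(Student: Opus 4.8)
The plan is to reduce the Frattini case entirely to the estimates already obtained for the $2$-power series in Theorem~\ref{P}, by sandwiching each term of the Frattini series between a power subgroup and one of the auxiliary subgroups~$\Gamma_k$. The decisive observation is that for a pro-$2$ group the Frattini series coincides with the iterated $2$-power series, so $\Phi_k(G)=\Phi_{k-1}(G)^2$ for all $k\in\mathbb{N}$; this is exactly the feature that makes Lemma~\ref{lemma Gamma} applicable.

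First I would establish the two-sided containment
$$
G^{2^k}\le\Phi_k(G)\le\Gamma_k\qquad(k\in\mathbb{N}).
$$
The left inclusion is the general fact that the $2^k$-th power subgroup lies inside the $k$-fold iterated square: writing $g^{2^k}=(g^{2^{k-1}})^2$ and inducting, $g^{2^{k-1}}\in\Phi_{k-1}(G)$ forces $g^{2^k}\in\Phi_{k-1}(G)^2=\Phi_k(G)$. For the right inclusion, the base case $\Phi_1(G)=G^2\le\Gamma_1$ is precisely the $k=1$ instance treated in the proof of Theorem~\ref{P}, and the inductive step is immediate from Lemma~\ref{lemma Gamma}, since $\Phi_k(G)=\Phi_{k-1}(G)^2\le\Gamma_{k-1}^{\,2}\le\Gamma_k$. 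This is the only genuinely new ingredient; once it is in place, everything else is inherited from the $\mathcal{P}$-case.

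With these containments at hand, intersecting with~$Z$ gives on one side $\Phi_k(G)\cap Z\le\Gamma_k\cap Z$, so that $\log_2|Z:\Phi_k(G)\cap Z|\ge\log_2|Z:\Gamma_k\cap Z|=2\binom{2^{k-1}}{2}$ by the proof of Theorem~\ref{P}, and on the other side, using $\gamma_{2^{k+1}}(G)\le G^{2^k}$ (established there), $\gamma_{2^{k+1}}(G)\cap Z\le\Phi_k(G)\cap Z\le\Gamma_k\cap Z$. Since $Z$ is elementary abelian, the exponent of each $Z/(\Phi_k(G)\cap Z)$ is at most~$2$, so I would apply Proposition~\ref{proposition path/area} with $n_k=2^{k+1}-1$ and $K_k=\Gamma_k$; the required limit condition reduces to $\varliminf_{k}\frac{2^{k+1}-1}{2\binom{2^{k-1}}{2}}=0$, which holds. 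This yields $[0,\xi]\subseteq\hspec_{\trianglelefteq}^{\mathcal{F}}(G)$ with $\xi=\hdim_G^{\mathcal{F}}(Z)$.

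It then remains to prove that $Z$ has strong Hausdorff dimension~$1$, i.e. $\xi=1$, and here the left containment is what does the work: from $G^{2^k}\le\Phi_k(G)$ we get $\log_2|G:\Phi_k(G)Z|\le\log_2|G:G^{2^k}Z|\le k+2^k$, the last bound supplied by the proof of Theorem~\ref{P}. Combining with the lower bound $\log_2|Z:\Phi_k(G)\cap Z|\ge 2\binom{2^{k-1}}{2}$ gives $\frac{\log_2|G:\Phi_k(G)Z|}{\log_2|Z:\Phi_k(G)\cap Z|}\le\frac{k+2^k}{2\binom{2^{k-1}}{2}}\to 0$, and arguing exactly as in Theorem~\ref{LD} this forces $\hdim_G^{\mathcal{F}}(Z)=1$ as a proper limit, whence $\hspec_{\trianglelefteq}^{\mathcal{F}}(G)=[0,1]$. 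The main obstacle, such as it is, is the verification of the upper containment $\Phi_k(G)\le\Gamma_k$: I expect it to hinge entirely on the clean base step $G^2\le\Gamma_1$ and the iteration furnished by Lemma~\ref{lemma Gamma}, after which no new computation beyond the $\mathcal{P}$-case is needed.
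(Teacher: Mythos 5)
Your proposal is correct, and it reaches the theorem by a genuinely different --- and shorter --- route than the paper's. The shared part is the upper containment $\Phi_k(G)\le\Gamma_k$: your induction (base case $G^2\le\Gamma_1$ from the proof of Theorem~\ref{P}, inductive step via Lemma~\ref{lemma Gamma}) is exactly the paper's. Where you diverge is the lower containment. The paper spends the bulk of its proof establishing, by a delicate induction using Corollary~\ref{corollary p^k} and manipulations of the elements $z_{i,j}$, that $T_k\big(\gamma_{2^k+2^{k-1}-1}(G)\cap Z\big)\le\Phi_k(G)$, where $T_k=\langle x^{2^k},c_i^{\,2},c_j\mid i\ge 2^{k-1},\,j\ge 2^k\rangle$, and it then controls $\log_2\lvert G:\Phi_k(G)Z\rvert$ by citing \cite[Prop.~2.6(3)]{KT}. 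You replace all of this by the soft containment $G^{2^k}\le\Phi_k(G)$ --- your induction is valid, and in fact only needs $\Phi_{k-1}(G)^2\le\Phi_k(G)$, which holds by the very definition of the Frattini series --- combined with three facts already proved in Theorem~\ref{P}: $\gamma_{2^{k+1}}(G)\le G^{2^k}$, $\log_2\lvert Z:\Gamma_k\cap Z\rvert=2\binom{2^{k-1}}{2}$, and $\log_2\lvert G:G^{2^k}Z\rvert\le k+2^k$. Your parameters $n_k=2^{k+1}-1$, $K_k=\Gamma_k$, $e_k\le 1$ do satisfy the hypotheses of Proposition~\ref{proposition path/area}, and your strong-dimension computation is the same squeeze as in Theorem~\ref{LD}; that your $n_k$ is larger than the paper's $2^k+2^{k-1}-2$ is immaterial, since both ratios tend to $0$. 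What your approach buys is economy and self-containedness: the $\mathcal{F}$-case reduces entirely to estimates from the $\mathcal{P}$-case, with no new commutator computations and no external citation. What the paper's approach buys is finer structural information about the Frattini series itself, namely the explicit two-sided approximation $T_k\big(\gamma_{2^k+2^{k-1}-1}(G)\cap Z\big)\le\Phi_k(G)\le\Gamma_k$, which pins down $\Phi_k(G)$ much more closely than your chain $\gamma_{2^{k+1}}(G)\le G^{2^k}\le\Phi_k(G)\le\Gamma_k$; but that extra precision is not needed for the theorem as stated.
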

\begin{proof}
We claim that
$$
T_k(\gamma_{2^{k}+2^{k-1}-1}(G)\cap Z)\le\Phi_k(G)\le \Gamma_k
$$
where
$$
T_k=\langle x^{2^k},c_i^{\,2},c_j\mid i\ge 2^{k-1},\, j\ge 2^k\rangle
$$
for all $k\in \mathbb{N}$.
We will proceed by induction on $k$.
If $k=1$ the result is clear, so assume $k\ge 2$.
On the one hand, it follows from Lemma~\ref{lemma Gamma} that
\begin{align*}
    \Phi_k(G)=\Phi_{k-1}(G)^2&\le\Gamma_{k-1}^{\,2}\le\Gamma_{k}.
\end{align*}
Hence, we only need to check that
$$
T_k(\gamma_{2^{k}+2^{k-1}-1}(G)\cap Z)\le \Delta,
$$
where
$$
\Delta=\Phi\big(T_{k-1}(\gamma_{2^{k-1}+2^{k-2}-1}(G)\cap Z)\big).
$$
Of course we have $x^{2^k},c_i^{\,2}\in\Delta$ for all $i\ge 2^{k-1}$.
We also have $T_{k-1}'\le\Delta$, so $\langle z_{i,j}\mid i>j\ge 2^{k-1}\rangle\le\Delta$.
Let us see that $z_{i,j}\in\Delta$ whenever $i>j$, $i+j\ge 2^{k}+2^{k-1}-1$ and $j\le 2^{k-1}-1$.
Consider the element $z_{i-2^{k-1},j}$ and observe that
$$
z_{i-2^{k-1},j}\in\gamma_{2^{k}-1}(G)\cap Z
$$
as
$$
i-2^{k-1}+j\ge 2^{k}-1.
$$
Therefore $[z_{i-2^{k-1},j},x^{2^{k-1}}]\in\Delta.$
By Corollary~\ref{corollary p^k}, it follows then that
$$
z_{i,j}z_{i-2^{k-1},j+2^{k-1}}z_{i,j+2^{k-1}}\in\Delta.
$$
Now we have $i>j+2^{k-1}$ and $j+2^{k-1}\ge 2^{k-1}$, so $z_{i,j+2^{k-1}}\in\Delta$.
Next, if $i-2^{k-1}>j+2^{k-1}$, then $z_{i-2^{k-1},j+2^{k-1}}\in\Delta$, and if $i-2^{k-1}\le j+2^{k-1}$, then as  $i-2^{k-1}\ge 2^{k-1}$, we have
$$
z_{i-2^{k-1},j+2^{k-1}}=z_{j+2^{k-1},i-2^{k-1}}^{-1}\in\Delta.
$$
Therefore $z_{i,j}\in\Delta$ and $\gamma_{2^{k}+2^{k-1}-1}(G)\cap Z\le\Delta$.

Finally, for $j\ge 2^{k-1}$, observe that
$$
[c_j,x^{2^{k-1}}]\equiv c_{j+2^{k-2}}^{\,2}c_{j+2^{k-1}}\pmod{\gamma_{2j+2^{k-1}}(G)\cap Z},
$$
and since $\gamma_{2j+2^{k-1}}(G)\cap Z\le\Delta$ and $c_i^{\,2}\in\Delta$ for all $i\ge 2^{k-1}$, we have $c_j\in\Delta$ for all $j\ge 2^k$. We conclude that
$$
T_k\big(\gamma_{2^{k}+2^{k-1}-1}(G)\cap Z\big)\le \Delta\le\Phi_k(G),
$$
as claimed. In particular, we get
$$
\gamma_{2^{k}+2^{k-1}-1}(G)\cap Z\le\Phi_k(G)\cap Z\le \Gamma_k\cap Z.
$$
Now, from Remark~\ref{remark index} we deduce that
\begin{equation}\label{eq:above}
\varliminf_{k\rightarrow\infty}\dfrac{2^{k}+2^{k-1}-1}{\log_2|Z:\Gamma_{k}\cap Z|}= \lim_{k\rightarrow\infty}\dfrac{2^{k}+2^{k-1}-1}{\log_2|Z:\Gamma_{k}\cap Z|}=  0.
\end{equation}
Hence, by Proposition~\ref{proposition path/area}, it only remains to show that $\hdim_G^{\mathcal{F}}(Z)=1$ and that it is given by a proper limit.
This follows easily since, from \cite[Prop.~2.6(3)]{KT} and (\ref{eq:above}), we deduce that
$$
 \lim_{k\rightarrow\infty}\dfrac{\log_2|G:\Phi_k(G)Z|}{\log_2|Z:\Phi_k(G)\cap Z|}=0,
$$
and so, as done in the proof of Theorem $\ref{LD}$, we obtain $\hdim_G^{\mathcal{F}}(Z)=1$.
It further follows that $Z$ has strong Hausdorff dimension with respect to~$\mathcal{F}$.
\end{proof}

\end{document}